\documentclass{article}
\usepackage{hyperref}
\usepackage[utf8]{inputenc}
\usepackage[english]{babel}
\usepackage[title]{appendix}
\usepackage{graphicx}
\usepackage{blkarray}
\usepackage{amsmath}
\usepackage{amssymb}
\usepackage{amsthm}
\usepackage{enumitem}
\usepackage{comment}
\usepackage[normalem]{ulem}
\usepackage{lipsum}                     
\usepackage{xargs}                      
\usepackage[pdftex,dvipsnames]{xcolor}  

\numberwithin{equation}{section}

\def\authorfont{\footnotesize}

\def\keywords#1{\par
	\vspace*{8pt}
	{\authorfont{\leftskip24pt\rightskip\leftskip
	\noindent{\it Keywords}\/:\ #1\par}}\par}

\def\ccode#1{\par		
	\vspace*{8pt}
	{\authorfont{\leftskip24pt\rightskip\leftskip
	\noindent #1\par}}\par}

\hypersetup{
  colorlinks=true,
  linkcolor=blue,
  citecolor=blue,
  urlcolor=blue,
  pdftitle={Pentagon equations, Delaunay triangulations and pure braid group invariant}
}

\title{Pentagon equations, Delaunay triangulations and pure braid group invariant}
\author{Illia E. Rohozhkin}

\theoremstyle{definition}
\newtheorem{theorem}{Theorem}[section]

\newtheorem{lemma}{Lemma}[section]

\newtheorem{definition}{Definition}[section]

\newtheorem{remark}{Remark}[section]

\date{\today}

\begin{document}

\maketitle

\begin{abstract}
We construct $(2n+1)\times (2n+1)$ matrices corresponding to a motion of points on the plane from the point of view of Delaunay triangulations. We define a homomorphism from the pure braid group on ($n+3$) strands to the general linear group $\text{GL}_{2n+1}(\mathbb{Q})$.
\end{abstract}

\keywords{Pure braid group; Delaunay triangulation; matrices; photography method;
pentagon equation; invariant.}

\ccode{Mathematics Subject Classification 2020: 57K10, 57K20}

\section{Introduction}
In \cite{Fedoseev-Manturov-Nikonov, Manturov-Kim, Invariants-and-Pictures} a family of groups $\Gamma_n^4$ was defined. These groups depend on the parameter $n>4$ and naturally describe the motion of points on the plane. A map $f:\text{PB}_n \rightarrow \Gamma_n^4$ is constructed by using Vorono\"i diagrams and Delaunay triangulations defined from these points. Isotopic braids correspond to equivalent word in $\Gamma_n^4$ due to relations of this group. The most interesting relation of the group $\Gamma_n^4$ is the {\em pentagon relation}.

In \cite{Korepanov-pentagon}, vast families of orthogonal operators obeying pentagon relation in a direct sum of three $n$-dimensional vector spaces are constructed.

In \cite{Manturov-Zheyan}, a $2\times 2$ matrix is assigned to each $\Gamma_n^4$ generator.

In this paper we will consider a motion of $n$ points on the plane. $(2n+1)\times (2n+1)$ matrices will be defined. These matrices will satisfy specified relations. We will build a map from the pure braid group to the $\text{GL}_{2n+1}(\mathbb{Q})$ group and prove that this map is a well defined homomorphism.

\section{Basic definitions}
\subsection{Braid groups}
\label{section:braid}
Consider the lines ${y = 0, z = 1}$ and ${y = 0, z = 0}$ in $\mathbb{R}^3$ and choose $n$ points on each of these lines having abscissas $1,\dots, n$.

\begin{definition}
An $n$-strand braid is a set of $n$ non-intersecting smooth paths connecting the chosen points on the first line with the points on the second line (in arbitrary order), such that the projection of each of these paths to $Oz$ represents a diffeomorphism.
\end{definition}

These smooth paths are called {\em strands} of the braid.

An example of a braid is shown in  Fig. \ref{fig:a_braid}.

It is natural to consider braids up to isotopy in $\mathbb{R}^3$.

\begin{figure}[h]
    \centering
    \includegraphics[width=0.15\textwidth]{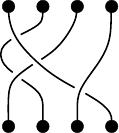}
    \caption{A braid.}
    \label{fig:a_braid}
\end{figure}

Two braids $B_0$ and $B_1$ are equal if they are {\em isotopic}; i.e., if there exists a continuous family of braids $B_t$, $\{t \in \{0,1\}\}$ of braids starting at $B_0$ and finishing at $B_1$.

The set of all $n$-strand braids generates a group. Usually this group is denoted by $\text{B}_n$.

\begin{definition}
The $n$-strand braid group $\text{B}_n$ is the group given by the presentation with $(n-1)$ generators $\sigma_1,\dots,\sigma_{n-1}$ and the following relations:
\begin{enumerate}
\item $\sigma_i\sigma_j = \sigma_j\sigma_i$ for $|i-j|\ge 2$;
\item $\sigma_i\sigma_{i+1}\sigma_i = \sigma_{i+1}\sigma_i\sigma_{i+1}$ for $1 \le i \le n-2$.
\end{enumerate}
\end{definition}

These relations are called Artin’s relations. The generators of the braid group are shown on Fig. \ref{fig:braid_group_generators}.

\begin{figure}[h]
    \centering
    \includegraphics[width=0.7\textwidth]{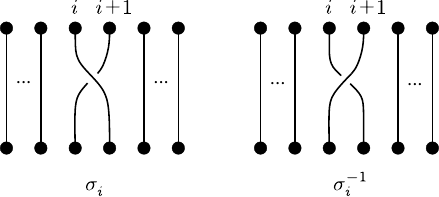}
    \caption{The generators of the braid group.}
    \label{fig:braid_group_generators}
\end{figure}

With each braid one can associate its permutation: this permutation takes
an element $k$ to $m$ if the strand starting with the $k$th upper point ends at
the $m$th lower point.

\begin{definition}
A braid is said to be {\em pure} if its permutation is identical. Obviously, pure braids generate a subgroup $\text{PB}_n \subset \text{B}_n$.
\end{definition}

The generator of the pure braid group is shown on Fig. \ref{fig:pure_braid_group_generators} \cite{Manturov-Knots}.

\begin{figure}[h]
    \centering
    \includegraphics[width=0.35\textwidth]{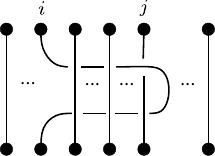}
    \caption{Generator $b_{ij}$ of the pure braid group.}
    \label{fig:pure_braid_group_generators}
\end{figure}

\begin{definition}
\label{def:pure_braid_representation}
The $n$-strand pure braid group $\text{PB}_n$ is the group given by the presentation with generators $b_{ij}$ and the following relations \cite{Staic}:
\begin{enumerate}
  \item $b_{ij}b_{kl}=b_{kl}b_{ij}$ for $k<l<i<j$ and $i<k<l<j$
  \item $b_{ij}b_{ik}b_{jk}=b_{jk}b_{ij}b_{ik}=b_{ik}b_{jk}b_{ij}$ for $i<j<k$
  \item $b_{jl}b_{kl}b_{ik}b_{jk}=b_{kl}b_{ik}b_{jk}b_{jl}$ for $i<j<k<l$.
\end{enumerate}
\end{definition}

\subsection{Dynamical systems}

Given a topological space $\Sigma$, called the {\em configuration space}; the elements of $\Sigma$ will be referred to as {\em particles}. The topology on $\Sigma^n$ defines a natural topology on the space of all continuous mappings $[0,1]\rightarrow \Sigma^n$ ($n\in\mathbb{N}$).

The {\em space of admissible dynamical systems} $\mathcal{D}$ is an open subset in the space of all maps $[0,1]\rightarrow \Sigma^n$.

A {\em state} is an element $D$ of $\Sigma^n$. By a {\em dynamical system} of $\mathcal{D}$ we mean the ordered set of particles $D(t) \in \Sigma^n$ for $t \in [0, 1]$. Herewith, $D(0)$ and $D(1)$ are called the initial state and the terminal state \cite{Invariants-and-Pictures}.

In \cite{Invariants-and-Pictures, Manturov-Kim} pure braids are considered as dynamical systems whose initial and final states coincide, see Fig. \ref{fig:dynamics}.
\begin{figure}[h]
    \centering
    \includegraphics[width=0.3\textwidth]{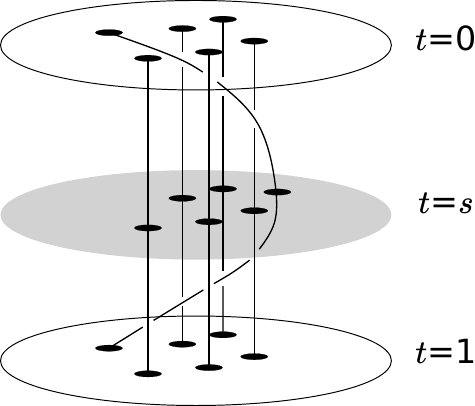}
    \caption{Dynamical system corresponding to $b_{ij} \in \text{PB}_n$ generator.}
    \label{fig:dynamics}
\end{figure}

\subsection{Delaunay triangulation}
Let us denote by $S$ a set of $n\ge 3$ point sites $p,q,r,\dots$ in the Euclidean plane, $\mathbb{R}^2$. For points $p=(p_1,p_2)$ and $x=(x_1,x_2)$, their Euclidean distance is given as
$$
d(p,x)=\sqrt{(p_1-x_1)^2+(p_2-x_2)^2}.
$$

The straight-line segment that connects two points $p$ and $q$ will be written
as $\overline{pq}$ in this section.

For $p,q \in S$, let $B(p,q)$ be the {\em bisector} of $p$ and $q$ (also called their
{\em separator}), which is the locus of all points in $\mathbb{R}^2$ at equal distance from
both $p$ and $q$. $B(p,q)$ is the perpendicular line through the midpoint of the
line segment $\overline{pq}$. It separates the halfplane
$$
D(p,q) = \{x\ |\ d(p, x) \le d(q, x)\}
$$
closer to $p$ from the halfplane $D(q,p)$ closer to $q$.

The {\em Vorono\"i region} of $p$ among the given set $S$ of sites, for short $\text{VR}(p,S)$, is the intersection of the $n-1$ halfplanes $D(p,q)$, where $q$ ranges over all the other sites in $S$:
$$
\text{VR}(p,S)=\bigcap_{q\in S,q\neq p}D(p,q).
$$

$\text{VR}(p,S)$ consists of all points $x\in \mathbb{R}^2$ for which $p$ is a nearest neighbor site.

The common boundary part of two Vorono\"i regions is called a {\em Vorono\"i edge}, if it contains more than one point.
\begin{definition}
The {\em Vorono\"i diagram} of $S$, for short $V(S)$, is defined as the union of all Vorono\"i edges.
\end{definition}

Endpoints of Vorono\"i edges are called {\em Vorono\"i vertices}; they belong to the common boundary of three or more Vorono\"i regions.

If a Vorono\"i edge $e$ borders the regions of $p$ and $q$ then $e\subset B(p,q)$ holds. That is, $V(S)$ is a {\em planar straight-line graph} whose edges emanate from Vorono\"i vertices.

There is an intuitive way of looking at the Vorono\"i diagram $V(S)$. Let
$x$ be an arbitrary point in the plane. We center a circle, $C$, at $x$ and let
its radius grow, from $0$ on. At some stage the expanding circle will, for the
first time, hit one or more sites of $S$. Now there are three different cases. If the circle $C$ expanding from point $x$ hits exactly one site, $p$, then $x$ belongs to the interior of region $\text{VR}(p,S)$. If $C$ hits exactly two sites, $p$ and $q$, then $x$ is an interior point of a Vorono\"i edge separating the Vorono\"i diagram  regions of $p$ and $q$. If $C$ hits three or more sites simultaneously, then $x$ is a Vorono\"i vertex adjacent to those regions whose sites have been hit \cite{Aurenhammer-Klein-Lee}.

In the book \cite{Aurenhammer-Klein-Lee} the definitions of {\em general position} of points means that no three of them are collinear, and no four of them are cocircular. In this part of the section we will use this concept, but for convenience this definition will be redefined in Sec. \ref{sec:triangles_number}.

The {\em Delaunay tessellation} $\text{DT}(S)$ is obtained by connecting with a line segment any two points $p$, $q$ of $S$, for which a circle exists that passes through $p$ and $q$ but does not contain any other site of $S$ in its interior or boundary. The edges of $\text{DT}(S)$ are called {\em Delaunay edges}.

If the point set $S$ is in general position then the dual graph $\text{DT}(S)$ of the Vorono\"i diagram $V(S)$, is a triangulation of $S$, called the {\em Delaunay triangulation}. Three points of $S$ give rise to a Delaunay triangle exactly if the circle they define does not enclose any other point of $S$ \cite{Aurenhammer-Klein-Lee}.

An example of both the Vorono\"i diagram and Delaunay triangulation are depicted in Fig. \ref{fig:voronoi_vs_delaunay}.

\begin{figure}[h]
    \centering
    \includegraphics[width=0.4\textwidth]{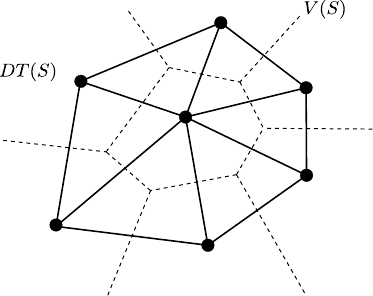}
    \caption{Vorono\"i diagram and Delaunay triangulation.}
    \label{fig:voronoi_vs_delaunay}
\end{figure}

\subsection{Number of triangles in a Delaunay triangulation}
\label{sec:triangles_number}
Consider the dynamic system $D(t)$, $0\le t\le 1$ of $n$ moving points on the plane. Let us numbers these points from $1$ to $n$.

Let us add three additional fixed points $a,b,c$ to the plane, so that at any moment $t$ the moving points of the dynamic system $D(t)$ do not go beyond the triangle $abc$ and the points never touch the edges of this triangle, see Fig. \ref{fig:delaunay_triangles}.

Thus, the total number of points on the plane will be equal to $m=n+3$.

Described contruction allows us to slightly simplify the definition of the general position of the points.

\begin{definition}
Further in the paper by the {\em general position} of $m$ points we will mean such a position of these points in which  no four of them are located on the same circle such that other points are at a futher distance from the center of this circle.\footnote{As we can see, this definition has a broad meaning and can depend on the context. Examples can be found in books \cite{Aurenhammer-Klein-Lee, Invariants-and-Pictures}.}
\end{definition}

\begin{figure}[h]
    \centering
    \includegraphics[width=0.8\textwidth]{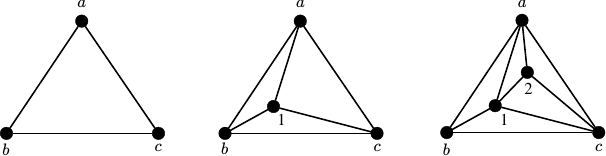}
    \caption{Delaunay triangulation inside triangle $abc$ for cases when $n=0$, $n=1$, $n=2$ respectively.}
    \label{fig:delaunay_triangles}
\end{figure}

It is obvious that if $m$ points are in general position, then the Delaunay triangulation in the construction above will be a trivalent graph.

\begin{lemma}
If all $m$ points are in general position, then the Delaunay triangulation on $m$ points has $2m-4$ faces in the construction described above.
\end{lemma}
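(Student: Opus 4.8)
The plan is to derive the count from Euler's formula for plane graphs. First I would record the geometric setup: since every point of the dynamical system $D(t)$ stays strictly inside the triangle $abc$ and never touches its sides, the convex hull of the full collection of $m = n+3$ points is exactly the triangle $abc$. Hence, for points in general position (in the sense just redefined), the Delaunay triangulation is a genuine triangulation of this triangle — a subdivision of the closed region bounded by $abc$ into triangles whose vertex set is precisely the $m$ points. Two small facts enter here and I would state them: general position is what rules out a non-triangular Delaunay cell, and each side $\overline{ab},\overline{bc},\overline{ca}$ is itself a Delaunay edge (for an edge of the convex hull one obtains an empty disk by pushing the circle through its two endpoints far to the outer side of the hull, where there are no sites at all).

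Next I would invoke Euler's formula $V - E + F = 2$ for this connected plane graph, where $F$ counts all faces, including the single unbounded face, namely the exterior of $abc$. Here $V = m$.

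Then comes the double counting of edge--face incidences. Every edge borders exactly two faces; every bounded face is a triangle, hence borders $3$ edges; and the unbounded face borders the $3$ sides of $abc$. Therefore $2E = 3(F-1) + 3 = 3F$, so $E = \tfrac{3}{2}F$, and substituting into Euler's formula yields $m - \tfrac{3}{2}F + F = 2$, that is $F = 2m - 4$, as claimed. It is worth noting explicitly that ``face'' here includes the unbounded region (as is consistent with the cases $n=0,1,2$ in Fig.~\ref{fig:delaunay_triangles}); the number of triangular cells proper is $2m-5$.

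There is no serious obstacle once the first paragraph is granted: the only non-formal ingredient is the topological assertion that, under our general-position hypothesis, the Delaunay graph really does triangulate the convex hull with no degenerate faces, which is precisely the statement quoted from \cite{Aurenhammer-Klein-Lee}; everything after that is the routine Euler-characteristic bookkeeping above. If one prefers not to appeal to that fact, an alternative is induction on $n$: the base case $n=0$ is the triangle $abc$ with two faces ($2m-4 = 2$), and inserting one more moving point either lands inside a triangle, splitting it into three (net $+2$ faces), or triggers a sequence of edge flips that leave the face count unchanged before the final split — but this route requires tracking the flips and is messier than the Euler argument.
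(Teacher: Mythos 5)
Your proof is correct and follows essentially the same route as the paper: Euler's formula $V-E+F=2$ combined with the edge--face incidence count $E=\tfrac{3}{2}F$, yielding $F=2m-4$. You are in fact slightly more careful than the paper in noting that the unbounded face also borders exactly three edges (the sides of $abc$), which is what makes the uniform count $2E=3F$ legitimate here.
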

\begin{proof}
Since we consider only those moments of time when the points are in a general position, then at these moments the Delaunay triangulation will be a trivalent graph. For a connected planar graph, the next Euler's formula holds for the numbers $v,e,f$ of vertices, edges and faces respectively:
$$
v-e+f=2.
$$

Let us express the number of edges in terms of the number of faces for the triangulation. Since each face contains exactly $3$ edges and each edge connects two faces consequently, it follows that: $v - \frac{3}{2}f + f = 2$. Finally, we obtain:
$$
f = 2v - 4.
$$

\end{proof}

Since in the construction above we used three additional points in addition to the $n$ points in the plane, the number of triangles can be expressed as $2n+2$.

Further in this paper, we will not take into account the outer edge and will assume that the number of triangles in the construction above is always equal to $2n+1$.

\subsection{Flips and pure braid group representation}
As the points of dynamical system $D(t)$ move, they may not be in general position at some moment. In this case, the Delaunay triangulation undergoes a flip, see Fig. \ref{fig:a_flip}. In the following paper, without limitation of generality, we will denote a flip by the expression $ik \to jl$ if this flip changes the diagonal $ik$ to the diagonal $jl$ inside the quadrilateral $ijkl$.

\begin{figure}[h]
    \centering
    \includegraphics[width=0.8\textwidth]{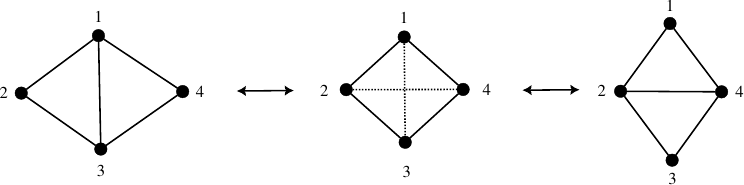}
    \caption{Flip $13 \to 24$.}
    \label{fig:a_flip}
\end{figure}

In \cite{Manturov-Kim, Manturov-Nikonov} both $\Gamma_n^4$ group and map $f_n:\text{PB}_n \rightarrow \Gamma_n^4$ were described. The group $\Gamma_n^4$ is naturally related to triangulations of $2$-surfaces and flips. A \mbox{generator} $d_{(ijkl)}$ of $\Gamma_n^4$ corresponds to a flip in a rectangle $ijkl$ of the Delaunay triangulation. For example, the generator $d_{(1234)}$ corresponds to the flip in Fig. \ref{fig:a_flip}.

\begin{definition}
The group $\Gamma_n^4$ is the group generated by
$$
\{d_{(ijkl)} | {i, j, k, l} \subset \bar{n}, |\{i, j, k, l\}| = 4\}
$$
subject to the following relations:
\begin{itemize}
    \item $d^2_{(ijkl)} = 1$ for $(i, j, k, l) \subset \bar{n}$,
    \item $d_{(ijkl)} d_{(stuv)} = d_{(stuv)} d_{(ijkl)}$, for $|\{i,j,k,l\} \cap \{s,t,u,v\}| < 3$,
    \item $d_{(ijkl)} d_{(ijlm)} d_{(jklm)} d_{(ijkm)} d_{(iklm)} = 1$ for distinct $i, j, k, l, m$.
    \item $d_{(ijkl)} = d_{(kjil)} = d_{(ilkj)} = d_{(klij)} = d_{(jkli)} = d_{(jilk)} = d_{(lkji)} = d_{(lijk)}$ for distinct $i,j,k,l$,
\end{itemize}
where $\bar{n}$ is the set $\{1,\dots,n\}$, $n\in\mathbb{N}$.
\end{definition}

The $f_n:\text{PB}_n \rightarrow \Gamma_n^4$ map is constructed as follows. The braid is considered as a dynamic system of moving points on a plane. On these points, a Delaunay triangulation is defined. This dynamic system corresponds to a finite number of critical moments of time at which the restructuring of the Delaunay triangulation occurs, i.e. when flips occur. The braid is associated with a sequence of flips. By associating the flips with the $\Gamma_n^4$ group generators, we obtain a word that corresponds to the braid. The relations in the $\Gamma_n^4$ group are composed in such a way that the isotopic braids correspond to the same words from the $\Gamma_n^4$ group. The $f_n$ map gives us the respresentation of the $\text{PB}_n$.

It was shown in \cite{Manturov-Zheyan} that each $d_{(ijkl)}$ generator can be associated with $2\times 2$ matrix of the form:
\begin{equation}
    \begin{pmatrix}
      \frac{\zeta_k-\zeta_j}{\zeta_l-\zeta_j} & \frac{\zeta_i-\zeta_j}{\zeta_l-\zeta_j} \\
      \frac{\zeta_k-\zeta_l}{\zeta_j-\zeta_l} & \frac{\zeta_i-\zeta_l}{\zeta_j-\zeta_l} \\
    \end{pmatrix} .
  \label{equation:flip-matrix}
\end{equation}

These matrices were obtained using the photography method. This method implies that we split the manifold into states and associate these states with some data. In this case, the state is the Delaunay triangulation, and the data are the area of the quadrilateral in which the flip occurs. The area of the triangle remains unchanged during the flip. This fact allows to obtain these matrices.

The sum of the elements in each column of this matrix is $1$. This allows to associate the matrix for the flip inside the pentagon as follows. Each triangulation of the pentagon consists of three triangles, each flip of the diagonals changes two of the three triangles and fixes the other one. The $3\times 3$ matrix is obtained from the $2\times 2$ matrix constructed for $d_{(ijkl)}$ by adding a diagonal entry $1$ corresponding to the fixed triangle. The Fig. \ref{fig:pentagon_flip} shows a flip of a pentagon. With such a reconstruction of the pentagon triangulation, one can associate a matrix of the form
\begin{equation}
  \begin{pmatrix}
      \frac{\zeta_k-\zeta_j}{\zeta_l-\zeta_j} & \frac{\zeta_i-\zeta_j}{\zeta_l-\zeta_j} & 0 \\
      \frac{\zeta_k-\zeta_l}{\zeta_j-\zeta_l} & \frac{\zeta_i-\zeta_l}{\zeta_j-\zeta_l} & 0 \\
      0 & 0 & 1 \\
  \end{pmatrix} .
  \label{equation:matrix}
\end{equation}

\begin{figure}[h]
    \centering
    \includegraphics[width=0.6\textwidth]{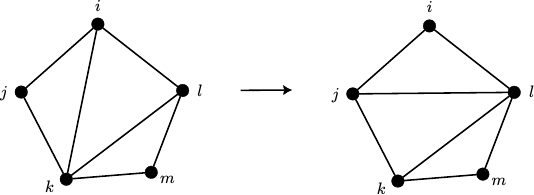}
    \caption{A pentagon flip.}
    \label{fig:pentagon_flip}
\end{figure}

As noted in the same paper, all generators of $\Gamma_n^4$ are involutions while these matrices are not. So, formally, this is not a representation of $\Gamma_n^4$, though it does give rise to an action of the pure braid group on labelled triangulations.

In this paper we will use similar ideas, but we will be able to construct matrices that will allow us to obtain a representation of the $\text{PB}_n$.

\section{Convention}
$\mathbb{Q}$ is the field of rational numbers.

In the rest of the paper we will consider $n$ moving points on a plane. Unless otherwise stated, we will assume that there are always three additional fixed points in the plane that never move, and that all other $n$ points are located inside the triangle formed by these three fixed points, and that moving points never touch the edges of this triangle, so the number of triangles in the Delaunay triangulation built on all these points always remains constant and it is equal to $(n+3)$.

\section{($2n+1) \times (2n+1$) Matrices}
Consider $n$ moving points in the plane. Let's associate each point on the plane with an unique element $\zeta_i \in \mathbb{Q}$ and denote the points by $\zeta_1,...,\zeta_{n+3}$ respectively. Assume these points are in general position over a period of time $t$. In this case, the Vorono\"i diagram is a graph of valency 3. Let $\mathcal{T}_t$ be the Delaunay triangulation corresponding to the Vorono\"i diagram during $t$. Let the set $\mathbf{T}_t$ be the set of triangles of the triangulation $\mathcal{T}_t$. 

We define a free $\mathbb{Q}$-module with basis $\mathbf{T}_t$ and denote it by $\mathbb{Q}^{(\mathbf{T}_t)}$. This module is a $(2n+1)$-dimensional vector space over $\mathbb{Q}$. We denote by $\mathbf{f}_t$ the ordered set of basis vectors of the $\mathbb{Q}^{(\mathbf{T}_t)}$ as follows:
$$
\mathbf{f}_{t} = (\Delta_{123}, \dots, \Delta_{{(n+1)}{(n+2)}{(n+3)}} ),
$$

\begin{remark}
The specific order of $\mathbb{Q}^{(\mathbf{T}_t)}$ basis can be fixed in any convenient way. Futher in this paper, we assume that indices of a basis vector are always ordered in ascending order, i.e. $\Delta_{ijk}$ means that $i<j<k$. We also assume that the basis $\mathbf{f}_{t}$ always complies lexicographic arrangement of triangles, for example $\Delta_{ijk}$ goes before $\Delta_{i'j'k'}$ if either $i<i'$ or $i=i'$ and $j<j'$ or $i=i'$, $j=j'$ and $k<k'$.
\end{remark}

As above, the Delaunay triangulation undergoes a flip at some critical moment when moving points are not in general position. Let the triangulation $\mathcal{T}_p$ differs from the triangulation $\mathcal{T}_t$ by a flip which changes the edge $\zeta_i\zeta_k$ to the edge $\zeta_j\zeta_l$. Then $\mathbf{T}_p\setminus \mathbf{T}_t=\{\Delta_{ijl}, \Delta_{jkl}\}$ and $\mathbf{T}_t\setminus \mathbf{T}_p=\{\Delta_{ijk}, \Delta_{ikl}\}$.

Now we will define a linear mapping $\gamma=\gamma_{\mathbb{Q}^{(\mathbf{T}_t)},\mathbb{Q}^{(\mathbf{T}_p)}}$. It can be uniquely determined by the images of the basis vectors of a vector space $\mathbb{Q}^{(\mathbf{T}_t)}$ \cite{Vinberg}. We define $\gamma$ as follows:
\begin{gather}
\gamma(\Delta)=\Delta \mbox{ for any } \Delta\in \mathbf{T}_t\cap \mathbf{T}_p, \label{eq:operator-1}\\
\gamma(\Delta_{ijk})=\frac{\zeta_{i} - \zeta_{l}}{\zeta_{i} - \zeta_{k}}\Delta_{ijl} + \frac{\zeta_{l} - \zeta_{k}}{\zeta_{i} - \zeta_{k}}\Delta_{jkl} \label{eq:operator-2}\\
\gamma(\Delta_{ikl})=\frac{\zeta_{i} - \zeta_{j}}{\zeta_{i} - \zeta_{k}}\Delta_{ijl} + \frac{\zeta_{j} - \zeta_{k}}{\zeta_{i} - \zeta_{k}}\Delta_{jkl} \label{eq:operator-3}
\end{gather}

Let $A_{\mathbf f_t,\mathbf f_p}$ be the matrix of the mapping $\gamma_{\mathbb{Q}^{(\mathbf{T}_t)},\mathbb{Q}^{(\mathbf{T}_p)}}$ in the bases $\mathbf f_t,\mathbf f_p$. If the triangulation $\mathcal{T}_t$ is clear from the context, then the triangulation $\mathcal{T}_p$ is determined by the flip $ik\to jl$, and we will use the notation $A_{ikjl}$ for the matrix $A_{\mathbf f_t,\mathbf f_p}$. This matrix has the form:

\begin{equation}
  A_{ikjl} = 
    \begin{pmatrix}
      * & ... & * & ... & * & ... & * \\
      \  & \  & \  & \vdots & \  & \  & \   \\
      * & ... & \frac{\zeta_i-\zeta_l}{\zeta_i-\zeta_k} & ... & \frac{\zeta_i-\zeta_j}{\zeta_i-\zeta_k} & ... & * \\
      \  & \  & \  & \vdots & \  & \  & \   \\
      * & ... & \frac{\zeta_l-\zeta_k}{\zeta_i-\zeta_k} & ... & \frac{\zeta_j-\zeta_k}{\zeta_i-\zeta_k} & ... & * \\
      \  & \  & \  & \vdots & \  & \  & \  \\
      * & ... & * & ... & * & ... & * \\
    \end{pmatrix}
  \label{equation:matrix_n_2}
\end{equation}
where $i,j,k,l \in \{1,2,...,n+3\}$; the {\em asterisk} is either $0$ or $1$ depending on the order of the vectors in the basis and is determined from formulas (\ref{eq:operator-1}) -- (\ref{eq:operator-3}).

Matrices with all other possible indices are defined similarly.
\begin{remark}
The sum of the column elements of these matrices is always equal to $1$.
\end{remark}

\begin{remark}
The $(2\times 2)$ matrix of the form $\begin{pmatrix} \frac{\zeta_i-\zeta_l}{\zeta_i-\zeta_k} & \frac{\zeta_i-\zeta_j}{\zeta_i-\zeta_k} \\ \frac{\zeta_l-\zeta_k}{\zeta_i-\zeta_k} & \frac{\zeta_j-\zeta_k}{\zeta_i-\zeta_k} \end{pmatrix}^{-1}$ is equal to the matrix from formula (\ref{equation:flip-matrix}).
\end{remark}

\begin{remark}
The set of all matrices of the form (\ref{equation:matrix_n_2}) is a subgroup of $\text{GL}_{2n+1}(\mathbb{Q})$. Indeed, it can easily be checked that these matrices are non-singular.
\end{remark}

\begin{remark}
Matrices of the form (\ref{equation:matrix_n_2}) are a special case of the matrices that were considered in the paper \cite{Korepanov-pentagon} in p. $4$.
\end{remark}

\begin{lemma}
The matrices of the form (\ref{equation:matrix_n_2}) satisfy the following relations:

\begin{enumerate}[label=(\alph*)]
    \item $A_{ikjl} = A^{-1}_{jlik}$, where $i,j,k,l\in\{1,\dots,n+3\}$, $A_{ikjl}=A_{\mathbf f_t,\mathbf f_p}$, $A_{jlik}=A_{\mathbf f_p,\mathbf f_t}$, $|\mathbf{T}_t \setminus \mathbf{T}_p|=2$,\label{relation:inverse}
    \item $A_{suvw}A_{ikjl}=A_{ikjl}'A_{suvw}'$, where $|\{i,j,k,l\}\cap\{s,u,v,w\}| < 3$, \\ $i,j,k,l,s,u,v,w\in\{1,\dots,n+3\}$, $A_{ikjl}=A_{\mathbf f_t,\mathbf f_p}$, $A_{suvw}=A_{\mathbf f_p,\mathbf f_q}$, $A_{suvw}'=A_{\mathbf f_t,\mathbf f_r}$, $A_{ikjl}'=A_{\mathbf f_r,\mathbf f_q}$, $|\mathbf{T}_t \setminus \mathbf{T}_p|=|\mathbf{T}_p \setminus \mathbf{T}_q|=|\mathbf{T}_t \setminus \mathbf{T}_{r}|=|\mathbf{T}_{r} \setminus \mathbf{T}_q|=2$,
    \label{relation:commutativity}
    \item $A_{jlik}A_{jmil}A_{kmjl}A_{ikjm}A_{ilkm}=I_{2n+1}$, where $i,j,k,l,m\in\{1,\dots,n+3\}$ are five points which are located on the same circle such that other points are at a futher distance from the center of this circle, \mbox{Fig. \ref{fig:ijklm-flip}}; $A_{ilkm}=A_{\mathbf{T}_t,\mathbf{T}_p}$, $A_{ikjm}=A_{\mathbf{T}_p,\mathbf{T}_q}$, $A_{kmjl}=A_{\mathbf{T}_q,\mathbf{T}_r}$, $A_{jmil}=A_{\mathbf{T}_r,\mathbf{T}_s}$, $A_{jlik}=A_{\mathbf{T}_s,\mathbf{T}_t}$; $|\mathbf{T}_t \setminus \mathbf{T}_p|=|\mathbf{T}_p \setminus \mathbf{T}_q|=|\mathbf{T}_q \setminus \mathbf{T}_r|=|\mathbf{T}_r \setminus \mathbf{T}_s|=|\mathbf{T}_s \setminus \mathbf{T}_t|=2$; $I_{2n+1}$ is the identity matrix of size $(2n+1)\times (2n+1)$.
    \label{relation:pentagon}
\end{enumerate}
\end{lemma}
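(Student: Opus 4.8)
The plan is to treat all three identities as formal equalities of matrices over the field $\mathbb{Q}(\zeta_1,\dots,\zeta_{n+3})$. The matrix $A_{ikjl}$ is determined by \eqref{eq:operator-1}--\eqref{eq:operator-3} from purely combinatorial data (which triangles are common to the two triangulations, which pair is removed, which pair is created), so the geometric hypotheses only serve to guarantee that the relevant flips actually occur along a Delaunay motion; once the combinatorics is fixed, each relation is a finite computation. In every case the crucial simplification is that $A_{ikjl}$ acts trivially on all coordinates indexed by triangles untouched by the flip $ik\to jl$, so the identity localizes to a $2$-, $4$-, or $3$-dimensional block.

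For \ref{relation:inverse}, I would note that $\gamma_{\mathbb{Q}^{(\mathbf T_p)},\mathbb{Q}^{(\mathbf T_t)}}\circ\gamma_{\mathbb{Q}^{(\mathbf T_t)},\mathbb{Q}^{(\mathbf T_p)}}$ fixes every element of $\mathbf T_t\cap\mathbf T_p$ by \eqref{eq:operator-1}, so it remains only to evaluate it on $\Delta_{ijk}$ and $\Delta_{ikl}$: apply \eqref{eq:operator-2}--\eqref{eq:operator-3} to land in $\mathbb{Q}^{(\mathbf T_p)}$ and then the same formulas for the reverse flip $jl\to ik$ to return. This is precisely the assertion that the $2\times2$ block of $A_{ikjl}$ and the matrix \eqref{equation:flip-matrix} are mutually inverse, which is a one-line determinant check. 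Since the composite is then the identity linear map on $\mathbb{Q}^{(\mathbf T_t)}$, its matrix in the basis $\mathbf f_t$ is $I_{2n+1}$, which is \ref{relation:inverse}.

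For \ref{relation:commutativity}, the hypothesis $|\{i,j,k,l\}\cap\{s,u,v,w\}|<3$ forbids any triangle from using three vertices of one quadrilateral, and in particular the four triangles moved by $ik\to jl$ and the four moved by $su\to vw$ are disjoint; hence the second flip is still admissible after the first and the two orders of performing them reach the same triangulation $\mathcal T_q$. Therefore $\gamma_{\mathbb{Q}^{(\mathbf T_p)},\mathbb{Q}^{(\mathbf T_q)}}\circ\gamma_{\mathbb{Q}^{(\mathbf T_t)},\mathbb{Q}^{(\mathbf T_p)}}$ and $\gamma_{\mathbb{Q}^{(\mathbf T_r)},\mathbb{Q}^{(\mathbf T_q)}}\circ\gamma_{\mathbb{Q}^{(\mathbf T_t)},\mathbb{Q}^{(\mathbf T_r)}}$ are the same linear map $\mathbb{Q}^{(\mathbf T_t)}\to\mathbb{Q}^{(\mathbf T_q)}$ — it acts by the $ik\to jl$ rule on one block of coordinates, by the $su\to vw$ rule on a disjoint block, and trivially elsewhere — and writing this common map in the bases $\mathbf f_t,\mathbf f_q$ gives $A_{suvw}A_{ikjl}=A'_{ikjl}A'_{suvw}$.

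The substantial case is \ref{relation:pentagon}. Since $i,j,k,l,m$ lie on a circle with every other point strictly farther from its centre, they are in convex position and no other point lies inside the pentagon they span; consequently the five flips only ever modify the three triangles internal to this pentagon, while every other triangle — including those incident to a pentagon vertex but lying outside it — is common to $\mathbf T_t,\mathbf T_p,\mathbf T_q,\mathbf T_r,\mathbf T_s$. Hence the product $A_{jlik}A_{jmil}A_{kmjl}A_{ikjm}A_{ilkm}$ is the identity on the $2n-2$ coordinates outside the pentagon, and \ref{relation:pentagon} reduces to showing that the induced endomorphism of the $3$-dimensional space spanned by the pentagon-triangles of $\mathcal T_t$ is the identity. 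The five triangulations of a convex pentagon form a $5$-cycle under flips, so this endomorphism is a product of five explicit $3\times3$ matrices — each one a $2\times2$ block taken from \eqref{eq:operator-2}--\eqref{eq:operator-3} together with a single entry $1$ on the triangle that flip leaves fixed — composed once around the cycle, and multiplying them out and simplifying with the elementary partial-fraction identities among the differences $\zeta_a-\zeta_b$ yields $I_3$. I expect this last step to be the main obstacle, not because the algebra is deep — it uses only arithmetic in $\mathbb{Q}(\zeta_1,\dots,\zeta_{n+3})$ — but because of the bookkeeping: one must identify at each of the five stages which triple of triangles is active and where it sits in the ordered basis, and keep the ascending-index convention for the labels $\Delta_{abc}$ consistent throughout. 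As an independent check, matrices of the form \eqref{equation:matrix_n_2} occur inside the family of \cite{Korepanov-pentagon}, which is already known to satisfy the pentagon relation.
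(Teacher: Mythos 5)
Your proposal follows essentially the same route as the paper: relation (a) is reduced to checking that the $2\times 2$ blocks of $A_{ikjl}$ and $A_{jlik}$ are mutually inverse, relation (b) to the observation that the two flips touch disjoint sets of triangles so the nontrivial blocks do not interact, and relation (c) to a product of five explicit $3\times 3$ matrices on the triangles inside the pentagon. The only difference is cosmetic --- you phrase the argument in terms of the linear maps $\gamma$ rather than matrix entries, and, like the paper (which defers to a computer calculation in its appendix), you leave the final $3\times 3$ pentagon product as a direct computation rather than writing it out.
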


The relation \ref{relation:commutativity} is called a \textit{far commutativity relation}. The relation \ref{relation:pentagon} is called a \textit{pentagon relation}.

Let us prove the first statement. Let us construct two new $2\times 2$ matrices $a_{ikjl},a_{jlik}$ based on the matrices in relation \ref{relation:inverse} as follows:

\begin{equation}\label{equation:A_ikjl}
a_{ikjl}=\begin{pmatrix} \frac{\zeta_i-\zeta_l}{\zeta_i-\zeta_k} & \frac{\zeta_i-\zeta_j}{\zeta_i-\zeta_k} \\ \frac{\zeta_l-\zeta_k}{\zeta_i-\zeta_k} & \frac{\zeta_j-\zeta_k}{\zeta_i-\zeta_k} \end{pmatrix}
\end{equation}
\begin{equation}\label{equation:A_jlik}
a_{jlik}=\begin{pmatrix} \frac{\zeta_j-\zeta_k}{\zeta_j-\zeta_l} & \frac{\zeta_j-\zeta_i}{\zeta_j-\zeta_l} \\ \frac{\zeta_k-\zeta_l}{\zeta_j-\zeta_l} & \frac{\zeta_i-\zeta_l}{\zeta_j-\zeta_l} \end{pmatrix}
\end{equation}

Next, we will use the inverse matrix formula described in the paper \cite{Manturov-Zheyan}. If $
a_{ikjl}=
  \begin{pmatrix}
    a & b \\
    c & d \\
  \end{pmatrix}
$, then
$a_{ikjl}^{-1}=
  \begin{pmatrix}
    \frac{1-b}{a-b} & \frac{-b}{a-b} \\
    \frac{a-1}{a-b} & \frac{a}{a-b}  \\
  \end{pmatrix}
$. Let us substitute the value of the elements from formula (\ref{equation:A_ikjl}) into the $a_{ikjl}^{-1}$ matrix formula. For the first element we get:
$$
\frac{1-b}{a-b}=\frac{1-\frac{\zeta_i-\zeta_j}{\zeta_i-\zeta_k}}{\frac{\zeta_i-\zeta_l}{\zeta_i-\zeta_k} - \frac{\zeta_i-\zeta_j}{\zeta_i-\zeta_k}}=\frac{\frac{\zeta_i-\zeta_k-\zeta_i+\zeta_j}{\zeta_i-\zeta_k}}{\frac{\zeta_i-\zeta_l-\zeta_i+\zeta_j}{\zeta_i-\zeta_k}}=\frac{\zeta_j-\zeta_k}{\zeta_j-\zeta_l}.
$$

It is obvious that the obtained element exactly coincides with the first element of the matrix $a_{jlik}$ from the formula (\ref{equation:A_jlik}). All other elements are proven similarly. The reasoning above will be true if we expand the matrices from formulas (\ref{equation:A_ikjl}), (\ref{equation:A_jlik}) with unit blocks.

The proof of commutativity of matrices follows from the fact that matrix elements which are neither $0$ nor $1$ in these matrices do not intersect. Therefore, when multiplying such matrices, each such element will be multiplied by either $0$ or $1$ of the second matrix.

Let us give an explicit example for matrices of size $6\times 6$. Let us assume that there are six points on the plane, points $\zeta_1,\zeta_2,\zeta_3$ are static, and points $\zeta_4,\zeta_5,\zeta_6$ are moving, see Fig. \ref{fig:commute_example}.
\begin{figure}[h]
    \centering
    \includegraphics[width=0.6\textwidth]{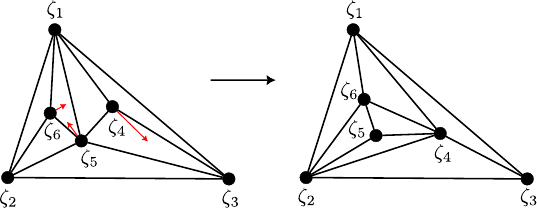}
    \caption{Example of two flips; arrows show the directions of movement of the points.}
    \label{fig:commute_example}
\end{figure}
Let us assume that during the movement two flips occurred, namely, edge $\zeta_3\zeta_5$ changed to edge $\zeta_2\zeta_4$, and edge $\zeta_1\zeta_5$ changed to edge $\zeta_4\zeta_6$. Such an event will correspond to the product of matrices of the form:
\bigbreak
\resizebox{1.0\hsize}{!}{$
\left(\begin{array}{rrrrrrr}
1 & 0 & 0 & 0 & 0 & 0 & 0 \\
0 & 1 & 0 & 0 & 0 & 0 & 0 \\
0 & 0 & \frac{\zeta_{1} - \zeta_{6}}{\zeta_{1} - \zeta_{5}} & \frac{\zeta_{1} - \zeta_{4}}{\zeta_{1} - \zeta_{5}} & 0 & 0 & 0 \\
0 & 0 & 0 & 0 & 1 & 0 & 0 \\
0 & 0 & 0 & 0 & 0 & 1 & 0 \\
0 & 0 & 0 & 0 & 0 & 0 & 1 \\
0 & 0 & \frac{\zeta_{6} - \zeta_{5}}{\zeta_{1} - \zeta_{5}} & \frac{\zeta_{4} - \zeta_{5}}{\zeta_{1} - \zeta_{5}} & 0 & 0 & 0
\end{array}\right)
\left(\begin{array}{rrrrrrr}
1 & 0 & 0 & 0 & 0 & 0 & 0 \\
0 & 1 & 0 & 0 & 0 & 0 & 0 \\
0 & 0 & 1 & 0 & 0 & 0 & 0 \\
0 & 0 & 0 & 1 & 0 & 0 & 0 \\
0 & 0 & 0 & 0 & \frac{\zeta_{3} - \zeta_{4}}{\zeta_{3} - \zeta_{5}} & 0 & -\frac{\zeta_{2} - \zeta_{3}}{\zeta_{3} - \zeta_{5}} \\
0 & 0 & 0 & 0 & \frac{\zeta_{4} - \zeta_{5}}{\zeta_{3} - \zeta_{5}} & 0 & \frac{\zeta_{2} - \zeta_{5}}{\zeta_{3} - \zeta_{5}} \\
0 & 0 & 0 & 0 & 0 & 1 & 0
\end{array}\right)=
\left(\begin{array}{rrrrrrr}
1 & 0 & 0 & 0 & 0 & 0 & 0 \\
0 & 1 & 0 & 0 & 0 & 0 & 0 \\
0 & 0 & \frac{\zeta_{1} - \zeta_{6}}{\zeta_{1} - \zeta_{5}} & \frac{\zeta_{1} - \zeta_{4}}{\zeta_{1} - \zeta_{5}} & 0 & 0 & 0 \\
0 & 0 & 0 & 0 & \frac{\zeta_{3} - \zeta_{4}}{\zeta_{3} - \zeta_{5}} & 0 & \frac{\zeta_{3} - \zeta_{2}}{\zeta_{3} - \zeta_{5}} \\
0 & 0 & 0 & 0 & \frac{\zeta_{4} - \zeta_{5}}{\zeta_{3} - \zeta_{5}} & 0 & \frac{\zeta_{2} - \zeta_{5}}{\zeta_{3} - \zeta_{5}} \\
0 & 0 & 0 & 0 & 0 & 1 & 0 \\
0 & 0 & \frac{\zeta_{6} - \zeta_{5}}{\zeta_{1} - \zeta_{5}} & \frac{\zeta_{4} - \zeta_{5}}{\zeta_{1} - \zeta_{5}} & 0 & 0 & 0
\end{array}\right)
$}

\bigbreak
Conversely, if edge $\zeta_1\zeta_5$ is first replaced by edge $\zeta_4\zeta_6$, and then edge $\zeta_3\zeta_5$ is replaced by $\zeta_2\zeta_4$, we will get a product of the form:

\bigbreak
\resizebox{1.0\hsize}{!}{$
\left(\begin{array}{rrrrrrr}
1 & 0 & 0 & 0 & 0 & 0 & 0 \\
0 & 1 & 0 & 0 & 0 & 0 & 0 \\
0 & 0 & 1 & 0 & 0 & 0 & 0 \\
0 & 0 & 0 & \frac{\zeta_{3} - \zeta_{4}}{\zeta_{3} - \zeta_{5}} & 0 & \frac{\zeta_{3} - \zeta_{2}}{\zeta_{3} - \zeta_{5}} & 0 \\
0 & 0 & 0 & \frac{\zeta_{4} - \zeta_{5}}{\zeta_{3} - \zeta_{5}} & 0 & \frac{\zeta_{2} - \zeta_{5}}{\zeta_{3} - \zeta_{5}} & 0 \\
0 & 0 & 0 & 0 & 1 & 0 & 0 \\
0 & 0 & 0 & 0 & 0 & 0 & 1
\end{array}\right)
\left(\begin{array}{rrrrrrr}
1 & 0 & 0 & 0 & 0 & 0 & 0 \\
0 & 1 & 0 & 0 & 0 & 0 & 0 \\
0 & 0 & \frac{\zeta_{1} - \zeta_{6}}{\zeta_{1} - \zeta_{5}} & \frac{\zeta_{1} - \zeta_{4}}{\zeta_{1} - \zeta_{5}} & 0 & 0 & 0 \\
0 & 0 & 0 & 0 & 1 & 0 & 0 \\
0 & 0 & 0 & 0 & 0 & 1 & 0 \\
0 & 0 & 0 & 0 & 0 & 0 & 1 \\
0 & 0 & \frac{\zeta_{6} - \zeta_{5}}{\zeta_{1} - \zeta_{5}} & \frac{\zeta_{4} - \zeta_{5}}{\zeta_{1} - \zeta_{5}} & 0 & 0 & 0
\end{array}\right)=
\left(\begin{array}{rrrrrrr}
1 & 0 & 0 & 0 & 0 & 0 & 0 \\
0 & 1 & 0 & 0 & 0 & 0 & 0 \\
0 & 0 & \frac{\zeta_{1} - \zeta_{6}}{\zeta_{1} - \zeta_{5}} & \frac{\zeta_{1} - \zeta_{4}}{\zeta_{1} - \zeta_{5}} & 0 & 0 & 0 \\
0 & 0 & 0 & 0 & \frac{\zeta_{3} - \zeta_{4}}{\zeta_{3} - \zeta_{5}} & 0 & \frac{\zeta_{3} - \zeta_{2}}{\zeta_{3} - \zeta_{5}} \\
0 & 0 & 0 & 0 & \frac{\zeta_{4} - \zeta_{5}}{\zeta_{3} - \zeta_{5}} & 0 & \frac{\zeta_{2} - \zeta_{5}}{\zeta_{3} - \zeta_{5}} \\
0 & 0 & 0 & 0 & 0 & 1 & 0 \\
0 & 0 & \frac{\zeta_{6} - \zeta_{5}}{\zeta_{1} - \zeta_{5}} & \frac{\zeta_{4} - \zeta_{5}}{\zeta_{1} - \zeta_{5}} & 0 & 0 & 0
\end{array}\right)
$}

\bigbreak
Obviously, we have obtained identical matrices. Therefore, these arguments will be true for matrices of larger size as well.

The pentagon relation for $3\times 3$ matrices is verified by direct computation, see. App. \ref{appendix:pentagon}. Larger matrices are obtained from $3 \times 3$ matrices by adding diagonal entries $1$ corresponding to the fixed triangles. The matrix blocks either do not change or the size of blocks changes the same for all matrices. The pentagon relation is preserved in this case.

\section{The main theorem}
\label{section:main_theorem}
The ($n+3$) points considered in the previous section form a dynamic system, which we denote by $D'$. Let us assume that the initial and final states of the $D'$ coincide. Then $D'$ corresponds to the element $b_{ij}$ of $n$-strand pure braid group described in Section \ref{section:braid}.

Let us enumerate the moments $0 < t_1 < t_2 < \dots < t_l < 1$ such that at the moment $t_k$ four points of the dynamic system $D'$ belong to one circle without points inside the circle.
Let us assume that at the moment $t_k$ the flip $pq\rightarrow rs$ occurred. Let $\mathcal{T}$ be the triangulation before the flip, and $\mathcal{T}'$ be the triangulation after the flip.
By $\mathbf{T}$ we denote a set of triangles that corresponds to triangulation $\mathcal{T}$ and by $\mathbf{T}'$ we denote a set of triangles that corresponds to triangulation $\mathcal{T}'$. 
As before, we define free $\mathbb{Q}$-modules on $\mathbf{T}$ and $\mathbf{T}'$ and denote them $\mathbb{Q}^{\mathbf{T}}$ and $\mathbb{Q}^{\mathbf{T}'}$, respectively. Let us denote by $\mathbf{f}$ the ordered basis of the module $\mathbb{Q}^{\mathbf{T}}$ and by $\mathbf{f}'$ the ordered basis of the module $\mathbb{Q}^{\mathbf{T}'}$. Now we associate the flip $pq\rightarrow rs$ with a linear mapping $\gamma_k:\mathbb{Q}^{\mathbf{T}}\rightarrow \mathbb{Q}^{\mathbf{T}'}$ by formulas (\ref{eq:operator-1}) -- (\ref{eq:operator-3}). By $A_k=A_{pqrs}$ we denote the matrix of the $\gamma_k$ and this matrix has a form (\ref{equation:matrix_n_2}). In matrix notation, this can be written as follows:
\begin{equation}
\gamma_k(\mathbf{f}) = \mathbf{f}'A_{pqrs}.
\label{eq:phi_1}
\end{equation}

Suppose that at time $t_{k+1}$ a flip $rs\rightarrow uv$ occurs. We obtain a new triangulation $\mathcal{T}''$ after the $t_{k+1}$ moment. The set of triangles $\mathbf{T}''$ corresponds to the triangulation $\mathcal{T}''$. We define a vector space $\mathbb{Q}^{\mathbf{T}''}$ with an ordered basis $\mathbf{f}''$ on the set $\mathbf{T}''$. We associate the flip $rs\rightarrow uv$ with the mapping $\gamma_{k+1}:\mathbb{Q}^{\mathbf{T}'}\rightarrow \mathbb{Q}^{\mathbf{T}''}$ by the same way. We denote the $\gamma_{k+1}$ matrix as $A_{k+1}=A_{rsuv}$. In matrix notation, this can be written as follows:
\begin{equation}
\gamma_{k+1}(\mathbf{f}') = \mathbf{f}''A_{rsuv}.
\label{eq:phi_2}
\end{equation}

From the axioms of linear mappings, it follows that we can build a composition of mappings $\gamma_{k}$ and $\gamma_{k+1}$ from formulas (\ref{eq:phi_1}) and (\ref{eq:phi_2}) as follows:
\begin{equation}
\gamma_{k+1}(\gamma_k(\mathbf{f})) = \mathbf{f}''A_{k+1}A_{k}.
\end{equation}

Thus, each moment $t_k$ can be associated with a linear mapping $\gamma_k$ and therefore the dynamic system $D'$ corresponding to a certain braid can be represented as a composition of such linear mappings. Since the state of the dynamic system $D'$ at moment $0$ and moment $1$ coincides, the bases of the corresponding vector spaces also coincide, and therefore we can associate an appropriate pure braid with a composition of linear mappings of the following form:
\begin{equation}
\gamma_{l}\gamma_{l-1}...\gamma_{2}\gamma_{1}(\mathbf{f})=\mathbf{f}A_lA_{l-1}...A_2A_1,
\end{equation}
where $\mathbf{f}$ is an ordered basis of the vector space corresponding to the triangulation defined at the ($n+3$) points of the dynamic system $D'$ at the initial state $D'(0)$.

Since after choosing the bases the linear mapping is completely determined by the linear mapping matrix, we can associate with the pure braid the product of the corresponding linear mappings matrices and prove the following theorem.

\begin{theorem}
The map $f_n:\text{PB}_n \rightarrow \textsc{GL}_{2n+1}(\mathbb{Q})$ is defined as follows:
$$
f_n(b_{ij}) = \prod_{k=1}^{l}A_{l-k+1},
$$
where the $A_{i}$ is a matrix of the form (\ref{equation:matrix_n_2}), is a well-defined homomorphism.

\end{theorem}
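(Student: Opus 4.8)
The plan is to view a pure braid $b_{ij}$ as a generic path in the space of configurations of the $n+3$ points (three fixed vertices, $n$ moving points inside the triangle), decompose that path into its finite sequence of flips, and prove that the product $A_lA_{l-1}\cdots A_1$ of the matrices of the form \eqref{equation:matrix_n_2} attached to those flips (i) lands in $\mathrm{GL}_{2n+1}(\mathbb{Q})$, (ii) does not depend on the chosen representative of the braid, and (iii) is multiplicative under concatenation. Facts (i) and the ``square matrix'' issue are immediate: by the Remark after \eqref{equation:matrix_n_2} each $A_{ikjl}$ is nonsingular, so every finite product is; and since the dynamical system $D'$ representing $b_{ij}$ satisfies $D'(0)=D'(1)$, the Delaunay triangulation and hence the ordered basis $\mathbf f$ at the initial and terminal moments coincide, so $A_lA_{l-1}\cdots A_1$ is the matrix, in the single fixed basis $\mathbf f$, of the endomorphism $\gamma_l\circ\cdots\circ\gamma_1$ of $\mathbb{Q}^{\mathbf T}$. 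Thus $f_n$ at least assigns a well-defined element of $\mathrm{GL}_{2n+1}(\mathbb{Q})$ to each generic representative.

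The substance of the proof is independence of the representative. Two generic paths realizing the same class in $\text{PB}_n$ are joined by a homotopy of paths that is generic except at finitely many moments, where it crosses a codimension-two stratum of the configuration space. As in \cite{Manturov-Kim, Manturov-Nikonov, Fedoseev-Manturov-Nikonov} these strata produce exactly three elementary modifications of the flip sequence: (1) insertion or deletion of a flip $ik\to jl$ immediately followed by the reverse flip $jl\to ik$; (2) transposition of two consecutive flips whose quadrilaterals share at most two vertices; (3) replacement of one triangulation cycle of a pentagon by the complementary one, occurring when five of the points pass simultaneously through a common circle with every other point strictly outside. I would then invoke the previous Lemma to see that $f_n$ is unchanged by each of these: (1) is relation \ref{relation:inverse} in the form $A_{jlik}A_{ikjl}=I$ (the rows and columns of the product outside the two active triangles are untouched), (2) is the far commutativity relation \ref{relation:commutativity}, and (3) is the pentagon relation \ref{relation:pentagon}, applied to the relevant contiguous block of $A_lA_{l-1}\cdots A_1$. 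Hence the product is constant along the homotopy, so $f_n$ descends to a map on $\text{PB}_n$. It is worth remarking that the involution relation $d_{(ijkl)}^2=1$ of $\Gamma_n^4$, which these matrices fail, is never needed: a flip and its reverse carry the \emph{distinct} matrices $A_{ikjl}$ and $A_{jlik}=A_{ikjl}^{-1}$, and it is precisely this directedness of flips that makes a matrix-valued invariant possible.

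For the homomorphism property I would note that the group operation of $\text{PB}_n$ is realized by concatenation of the corresponding dynamical systems, which glue along the common base configuration; the flip sequence of a product braid is therefore the concatenation of the two flip sequences, so the associated matrix is the corresponding product of the two matrices — a homomorphism with respect to the composition convention fixed in \eqref{eq:phi_1}--\eqref{eq:phi_2} (one reverses the order of factors if the paper's braid-multiplication convention is the opposite). Evaluating on a single generator recovers the displayed formula $f_n(b_{ij})=\prod_{k=1}^l A_{l-k+1}$. The step I expect to be the real obstacle is item (3) above: importing the stratification analysis underlying the definition of $\Gamma_n^4$ to be sure the pentagon event is the only genuinely new codimension-two phenomenon, that crossing it replaces the flip word by exactly the five-term cycle $jl\,ik \mid jm\,il \mid km\,jl \mid ik\,jm \mid il\,km$ appearing in \ref{relation:pentagon}, and that the vertex labelling and orientation conventions match ours so that \ref{relation:pentagon} applies verbatim rather than only after a relabelling of the pentagon's triangles.
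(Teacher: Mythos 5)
Your proposal follows essentially the same route as the paper: both reduce well-definedness to the classification of codimension-two events in the isotopy (cancelling flip pairs, far commutativity of flips in quadrilaterals sharing at most two vertices, and the pentagon cycle), matching each to relations \ref{relation:inverse}, \ref{relation:commutativity}, \ref{relation:pentagon} of the Lemma, with the classification itself imported from \cite{Fedoseev-Manturov-Nikonov}. The additional points you make explicit (nonsingularity, coincidence of initial and terminal bases, and multiplicativity under concatenation) are handled in the paper in the discussion preceding the theorem statement, so the two arguments agree in substance.
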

\begin{proof}
We consider isotopies between two pure $n$-strand braids as isotopies of general position state of ($n+3$) points on the plane. A finite number of events of codimension at most two correspond to the general position state isotopies. These events were classified in \cite{Fedoseev-Manturov-Nikonov}. Now we list them explicitly.
\begin{enumerate}[label=(\roman*)]
    \item One point moving on the plane is tangential to the circle, which passes through three points, see  Fig. \ref{fig:condimetion-dots}. This event corresponds to the \mbox{$A_{pvsu} \cdot A_{supv}$} product that is equivalent to the identity matrix by the relation \ref{relation:inverse}.

\begin{figure}[h]
    \centering
    \includegraphics[width=0.7\textwidth]{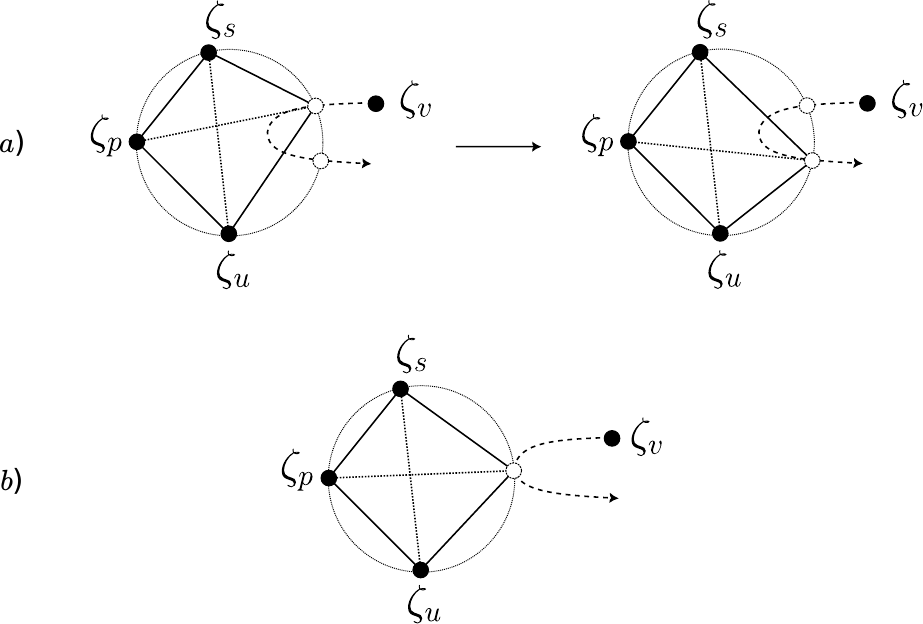}
    \caption{Codimension-$2$ events; point $\zeta_v$ moves, being tangential to the circle, which passes through $\zeta_s$, $\zeta_p$ and $\zeta_u$.}
    \label{fig:condimetion-dots}
\end{figure}
    \item There are two sets $A$ and $B$ of four points, which are on the same circles such that $|A \cap B| \le 2$, see Fig. \ref{fig:commutativity-dots}. Such an event corresponds to two flips in different quadrilaterials in an arbitrary order. For example, these can be flips $ju \to iv, ik \to jl$, or vice versa $ik \to jl, ju \to iv$. The product of the matrices corresponding to these flips must be equivalent regardless of the order of multiplication of these matrices which exactly corresponds to the far commutativity relation \ref{relation:commutativity}.
\begin{figure}[h]
    \centering
    \includegraphics[width=0.4\textwidth]{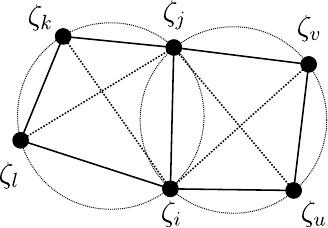}
    \caption{Codimension-$2$ event; two sets A and B of four points on the circles such that $|A \cap B| = 2$.}
    \label{fig:commutativity-dots}
\end{figure}
    \item There are five points $\{\zeta_i,\zeta_j,\zeta_k,\zeta_l,\zeta_m\}$ on the same circle; all other points are at a futher distance from the center of this circle. We obtain the sequence of five subsets of $\{\zeta_i,\zeta_j,\zeta_k,\zeta_l,\zeta_m\}$ with four points on the same circle, which corresponds to the flips on the pentagon, see Fig. \ref{fig:ijklm-flip}. This corresponds to the pentagon relation \ref{relation:pentagon}.
\begin{figure}[h]
    \centering
    \includegraphics[width=0.8\textwidth]{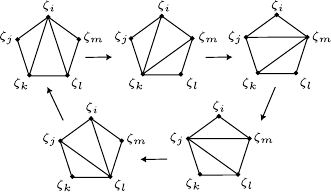}
    \caption{Codimension-$2$ event; a sequence of five subsets of $\{\zeta_i,\zeta_j,\zeta_k,\zeta_l,\zeta_m\}$, corresponding to flips on the pentagon.}
    \label{fig:ijklm-flip}
\end{figure}
    \label{item:pentagon}
\end{enumerate}
\end{proof}

Geometric interpretation of the \ref{item:pentagon} event from the point of view of the braids
group was described in \cite{Manturov-Nikonov}.

In this paper the ``photography method'' was also used which you can read briefly about in the paper \cite{Manturov-Zheyan}. This method helps to understand pentagon relation geometrically.

\section{Further directions}
Among further directions which may appear in further papers is knots invariants constructing.

A braid closure allows to construct a knot from the braid. Also it is possible to transform one braid to another by a sequence of Markov movements. The first move is the conjugation. The second move is adding a new strand to the braid and twisting this strand once with the last strand of the braid.

Having a braids invariant in matrices, we need to find some characteristics of the matrix that are invariant under the first and second Markov movements. It can be a trace or characteristic polynomial of the matrix. This will allow us to construct an invariant
for knots.

\section{Acknowledgements}
\thanks{I express my gratitude to my scientific supervisor Vasily O. Manturov and Igor M. Nikonov for valuable advice and assistance in writing this article and to Igor G. Korepanov for the original ideas that formed its basis. I also express my gratitude to Kim Seongjeong, Alexey V. Sleptsov, Wan Zheyan and Louis Kauffman for their comments.}

\pagebreak

\begin{appendices}
\section{Pentagon calculation example}
\label{appendix:pentagon}

Let us consider the composition of linear mappings that correspond to the reconstruction of the triangulation inside the pentagon as it shown in Fig. \ref{fig:ijklm-flip}.

We will denote the basis vectors by the indices of the variables, for example, the vector that corresponds to the triangle formed by the variables $\zeta_i, \zeta_j, \zeta_k$ will be denoted simply as $ijk$.

Thus, we can assume that a free $\mathbb{Q}$-module with a basis $(ijk,ikl,ilm)$ corresponds to the the initial state of the considered dynamic system.

The restructuring of the pentagon triangulation corresponds to the following series of flips: $\zeta_i \zeta_l \rightarrow \zeta_k \zeta_m$, $\zeta_i \zeta_k \rightarrow \zeta_j \zeta_m$, $\zeta_k \zeta_m \rightarrow \zeta_j \zeta_l$, $\zeta_j \zeta_m \rightarrow \zeta_i \zeta_l$, $\zeta_j \zeta_l \rightarrow \zeta_i \zeta_k$.

In turn, each of these flips is associated with the following linear mappings with the corresponding bases:

$$
\gamma_1(ijk,ikl,ilm)
=
(ijk,ikm,klm)
\left(\begin{array}{rrr}
1 & 0 & 0 \\
0 & \frac{\zeta_{i} - \zeta_{m}}{\zeta_{i} - \zeta_{l}} & \frac{\zeta_{i} - \zeta_{k}}{\zeta_{i} - \zeta_{l}} \\
0 & \frac{\zeta_{m} - \zeta_{l}}{\zeta_{i} - \zeta_{l}} & \frac{\zeta_{k} - \zeta_{l}}{\zeta_{i} - \zeta_{l}}
\end{array}\right),
$$

$$
\gamma_2(ijk,ikm,klm)
=
(ijm,jkm,klm)
\left(\begin{array}{rrr}
\frac{\zeta_{i} - \zeta_{m}}{\zeta_{i} - \zeta_{k}} & \frac{\zeta_{i} - \zeta_{j}}{\zeta_{i} - \zeta_{k}} & 0 \\
\frac{\zeta_{m} - \zeta_{k}}{\zeta_{i} - \zeta_{k}} & \frac{\zeta_{j} - \zeta_{k}}{\zeta_{i} - \zeta_{k}} & 0 \\
0 & 0 & 1
\end{array}\right),
$$

$$
\gamma_3(ijm,jkm,klm)
=
(ijm,jkl,jlm)
\left(\begin{array}{rrr}
1 & 0 & 0 \\
0 & \frac{\zeta_{k} - \zeta_{l}}{\zeta_{k} - \zeta_{m}} & \frac{\zeta_{k} - \zeta_{j}}{\zeta_{k} - \zeta_{m}} \\
0 & \frac{\zeta_{l} - \zeta_{m}}{\zeta_{k} - \zeta_{m}} & \frac{\zeta_{j} - \zeta_{m}}{\zeta_{k} - \zeta_{m}}
\end{array}\right),
$$

$$
\gamma_4(ijm,jkl,jlm)
=
(ijl,ilm,jkl)
\left(\begin{array}{rrr}
\frac{\zeta_{j} - \zeta_{l}}{\zeta_{j} - \zeta_{m}} & 0 & \frac{\zeta_{j} - \zeta_{i}}{\zeta_{j} - \zeta_{m}} \\
\frac{\zeta_{l} - \zeta_{m}}{\zeta_{j} - \zeta_{m}} & 0 & \frac{\zeta_{i} - \zeta_{m}}{\zeta_{j} - \zeta_{m}} \\
0 & 1 & 0
\end{array}\right),
$$

$$
\gamma_5(ijl,ilm,jkl)
=
(ijk,ikl,ilm)
\left(\begin{array}{rrr}
\frac{\zeta_{j} - \zeta_{k}}{\zeta_{j} - \zeta_{l}} & 0 & \frac{\zeta_{j} - \zeta_{i}}{\zeta_{j} - \zeta_{l}} \\
\frac{\zeta_{k} - \zeta_{l}}{\zeta_{j} - \zeta_{l}} & 0 & \frac{\zeta_{i} - \zeta_{l}}{\zeta_{j} - \zeta_{l}} \\
0 & 1 & 0
\end{array}\right).
$$

The composition of such linear operators will look like this:
$$
\gamma_5\gamma_4\gamma_3\gamma_2\gamma_1(ijk,ikl,ilm)=(ijk,ikl,ilm)B,
$$

where $B$ is a matrix that is equal to the product of the corresponding linear mappings matrices in the following order:
\begin{gather*}
B = 
\left(\begin{array}{rrr}
\frac{\zeta_{j} - \zeta_{k}}{\zeta_{j} - \zeta_{l}} & 0 & \frac{\zeta_{j} - \zeta_{i}}{\zeta_{j} - \zeta_{l}} \\
\frac{\zeta_{k} - \zeta_{l}}{\zeta_{j} - \zeta_{l}} & 0 & \frac{\zeta_{i} - \zeta_{l}}{\zeta_{j} - \zeta_{l}} \\
0 & 1 & 0
\end{array}\right)
\left(\begin{array}{rrr}
\frac{\zeta_{j} - \zeta_{l}}{\zeta_{j} - \zeta_{m}} & 0 & \frac{\zeta_{j} - \zeta_{i}}{\zeta_{j} - \zeta_{m}} \\
\frac{\zeta_{l} - \zeta_{m}}{\zeta_{j} - \zeta_{m}} & 0 & \frac{\zeta_{i} - \zeta_{m}}{\zeta_{j} - \zeta_{m}} \\
0 & 1 & 0
\end{array}\right)
\left(\begin{array}{rrr}
1 & 0 & 0 \\
0 & \frac{\zeta_{k} - \zeta_{l}}{\zeta_{k} - \zeta_{m}} & \frac{\zeta_{k} - \zeta_{j}}{\zeta_{k} - \zeta_{m}} \\
0 & \frac{\zeta_{l} - \zeta_{m}}{\zeta_{k} - \zeta_{m}} & \frac{\zeta_{j} - \zeta_{m}}{\zeta_{k} - \zeta_{m}}
\end{array}\right) \\
\cdot\left(\begin{array}{rrr}
\frac{\zeta_{i} - \zeta_{m}}{\zeta_{i} - \zeta_{k}} & \frac{\zeta_{i} - \zeta_{j}}{\zeta_{i} - \zeta_{k}} & 0 \\
\frac{\zeta_{m} - \zeta_{k}}{\zeta_{i} - \zeta_{k}} & \frac{\zeta_{j} - \zeta_{k}}{\zeta_{i} - \zeta_{k}} & 0 \\
0 & 0 & 1
\end{array}\right)
\left(\begin{array}{rrr}
1 & 0 & 0 \\
0 & \frac{\zeta_{i} - \zeta_{m}}{\zeta_{i} - \zeta_{l}} & \frac{\zeta_{i} - \zeta_{k}}{\zeta_{i} - \zeta_{l}} \\
0 & \frac{\zeta_{m} - \zeta_{l}}{\zeta_{i} - \zeta_{l}} & \frac{\zeta_{k} - \zeta_{l}}{\zeta_{i} - \zeta_{l}}
\end{array}\right) = I_3.
\end{gather*}

Direct computation on a computer allows us to verify that the product of such matrices is equal to the identity matrix $I_3$.

\section{Invariant calculation example}
Let us give an example of calculating the invariant. As an example, let us take the first relation from the presentation of the $\text{PB}_n$ given in the Definition \ref{def:pure_braid_representation}:
$$
b_{ij}b_{kl}=b_{kl}b_{ij}
$$
for $k<l<i<j$ and $i<k<l<j$.

An example of a dynamic system that corresponds to the braids $b_{ij},b_{kl}$ for the case $i<k<l<j$ is shown in the Fig. \ref{fig:pure_braid_calc_example}. According to this picture $\zeta_i=\zeta_4$, $\zeta_k=\zeta_5$, $\zeta_l=\zeta_7$, $\zeta_j=\zeta_8$.

\begin{figure}[h]
    \centering
    \includegraphics[width=0.8\textwidth]{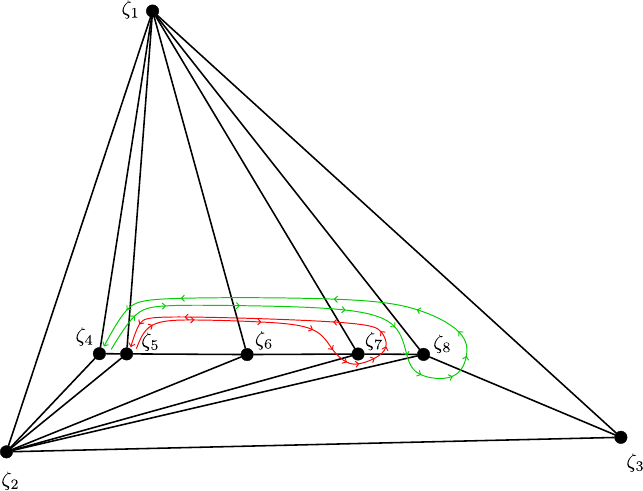}
    \caption{Example of a dynamic system of pure braids; arrows indicate the trajectories of the point $\zeta_i=\zeta_4$ of the braid $b_{ij}$ and the point $\zeta_k=\zeta_5$ of the braid $b_{kl}$.}
    \label{fig:pure_braid_calc_example}
\end{figure}

Further, for convenience, we will assume that $\zeta_i=i,\ i \in \{1,\dots,8\}$.

The braid $b_{ij}$ will correspond to the product of the following matrices:
\begin{gather*}
f_5(b_{ij})=A_{4615}A_{1524}A_{4716}A_{4817}A_{1645}A_{1746}A_{3418}A_{1847}A_{2438}A_{4728} \cdot\\ \cdot A_{3814}A_{1478}A_{2834}A_{7824}A_{4617}A_{4516}A_{1748}A_{1647}A_{2415}A_{1546}
\end{gather*}

Let us denote the product of these matrices by the  $A_{b_{ij}}$. In matrix form this product looks as follow:

$$
A_{b_{ij}}=f_5(b_{ij})=
$$

\begin{gather*}
\left(\begin{array}{rrrrrrrrrrr}
1 & 0 & 0 & 0 & 0 & 0 & 0 & 0 & 0 & 0 & 0 \\
0 & 1 & 0 & 0 & 0 & 0 & 0 & 0 & 0 & 0 & 0 \\
0 & 0 & \frac{1}{2} & 0 & 0 & 0 & 0 & 0 & 0 & 0 & -\frac{3}{2} \\
0 & 0 & \frac{1}{2} & 0 & 0 & 0 & 0 & 0 & 0 & 0 & \frac{5}{2} \\
0 & 0 & 0 & 1 & 0 & 0 & 0 & 0 & 0 & 0 & 0 \\
0 & 0 & 0 & 0 & 1 & 0 & 0 & 0 & 0 & 0 & 0 \\
0 & 0 & 0 & 0 & 0 & 1 & 0 & 0 & 0 & 0 & 0 \\
0 & 0 & 0 & 0 & 0 & 0 & 1 & 0 & 0 & 0 & 0 \\
0 & 0 & 0 & 0 & 0 & 0 & 0 & 1 & 0 & 0 & 0 \\
0 & 0 & 0 & 0 & 0 & 0 & 0 & 0 & 1 & 0 & 0 \\
0 & 0 & 0 & 0 & 0 & 0 & 0 & 0 & 0 & 1 & 0
\end{array}\right)
\left(\begin{array}{rrrrrrrrrrr}
\frac{3}{4} & 0 & \frac{1}{4} & 0 & 0 & 0 & 0 & 0 & 0 & 0 & 0 \\
0 & 1 & 0 & 0 & 0 & 0 & 0 & 0 & 0 & 0 & 0 \\
0 & 0 & 0 & 1 & 0 & 0 & 0 & 0 & 0 & 0 & 0 \\
0 & 0 & 0 & 0 & 1 & 0 & 0 & 0 & 0 & 0 & 0 \\
0 & 0 & 0 & 0 & 0 & 1 & 0 & 0 & 0 & 0 & 0 \\
0 & 0 & 0 & 0 & 0 & 0 & 1 & 0 & 0 & 0 & 0 \\
\frac{1}{4} & 0 & \frac{3}{4} & 0 & 0 & 0 & 0 & 0 & 0 & 0 & 0 \\
0 & 0 & 0 & 0 & 0 & 0 & 0 & 1 & 0 & 0 & 0 \\
0 & 0 & 0 & 0 & 0 & 0 & 0 & 0 & 1 & 0 & 0 \\
0 & 0 & 0 & 0 & 0 & 0 & 0 & 0 & 0 & 1 & 0 \\
0 & 0 & 0 & 0 & 0 & 0 & 0 & 0 & 0 & 0 & 1
\end{array}\right) \\
\left(\begin{array}{rrrrrrrrrrr}
1 & 0 & 0 & 0 & 0 & 0 & 0 & 0 & 0 & 0 & 0 \\
0 & 1 & 0 & 0 & 0 & 0 & 0 & 0 & 0 & 0 & 0 \\
0 & 0 & 1 & 0 & 0 & 0 & 0 & 0 & 0 & 0 & 0 \\
0 & 0 & 0 & \frac{2}{3} & 0 & 0 & 0 & 0 & 0 & 0 & -1 \\
0 & 0 & 0 & \frac{1}{3} & 0 & 0 & 0 & 0 & 0 & 0 & 2 \\
0 & 0 & 0 & 0 & 1 & 0 & 0 & 0 & 0 & 0 & 0 \\
0 & 0 & 0 & 0 & 0 & 1 & 0 & 0 & 0 & 0 & 0 \\
0 & 0 & 0 & 0 & 0 & 0 & 1 & 0 & 0 & 0 & 0 \\
0 & 0 & 0 & 0 & 0 & 0 & 0 & 1 & 0 & 0 & 0 \\
0 & 0 & 0 & 0 & 0 & 0 & 0 & 0 & 1 & 0 & 0 \\
0 & 0 & 0 & 0 & 0 & 0 & 0 & 0 & 0 & 1 & 0
\end{array}\right)
\left(\begin{array}{rrrrrrrrrrr}
1 & 0 & 0 & 0 & 0 & 0 & 0 & 0 & 0 & 0 & 0 \\
0 & 1 & 0 & 0 & 0 & 0 & 0 & 0 & 0 & 0 & 0 \\
0 & 0 & 1 & 0 & 0 & 0 & 0 & 0 & 0 & 0 & 0 \\
0 & 0 & 0 & \frac{3}{4} & 0 & 0 & 0 & 0 & 0 & 0 & -\frac{3}{4} \\
0 & 0 & 0 & \frac{1}{4} & 0 & 0 & 0 & 0 & 0 & 0 & \frac{7}{4} \\
0 & 0 & 0 & 0 & 1 & 0 & 0 & 0 & 0 & 0 & 0 \\
0 & 0 & 0 & 0 & 0 & 1 & 0 & 0 & 0 & 0 & 0 \\
0 & 0 & 0 & 0 & 0 & 0 & 1 & 0 & 0 & 0 & 0 \\
0 & 0 & 0 & 0 & 0 & 0 & 0 & 1 & 0 & 0 & 0 \\
0 & 0 & 0 & 0 & 0 & 0 & 0 & 0 & 1 & 0 & 0 \\
0 & 0 & 0 & 0 & 0 & 0 & 0 & 0 & 0 & 1 & 0
\end{array}\right) \\
\left(\begin{array}{rrrrrrrrrrr}
1 & 0 & 0 & 0 & 0 & 0 & 0 & 0 & 0 & 0 & 0 \\
0 & 1 & 0 & 0 & 0 & 0 & 0 & 0 & 0 & 0 & 0 \\
0 & 0 & \frac{4}{5} & 0 & \frac{3}{5} & 0 & 0 & 0 & 0 & 0 & 0 \\
0 & 0 & 0 & 1 & 0 & 0 & 0 & 0 & 0 & 0 & 0 \\
0 & 0 & 0 & 0 & 0 & 1 & 0 & 0 & 0 & 0 & 0 \\
0 & 0 & 0 & 0 & 0 & 0 & 1 & 0 & 0 & 0 & 0 \\
0 & 0 & 0 & 0 & 0 & 0 & 0 & 1 & 0 & 0 & 0 \\
0 & 0 & 0 & 0 & 0 & 0 & 0 & 0 & 1 & 0 & 0 \\
0 & 0 & \frac{1}{5} & 0 & \frac{2}{5} & 0 & 0 & 0 & 0 & 0 & 0 \\
0 & 0 & 0 & 0 & 0 & 0 & 0 & 0 & 0 & 1 & 0 \\
0 & 0 & 0 & 0 & 0 & 0 & 0 & 0 & 0 & 0 & 1
\end{array}\right)
\left(\begin{array}{rrrrrrrrrrr}
1 & 0 & 0 & 0 & 0 & 0 & 0 & 0 & 0 & 0 & 0 \\
0 & 5 & 0 & 0 & 0 & 0 & 0 & 0 & -2 & 0 & 0 \\
0 & 0 & 1 & 0 & 0 & 0 & 0 & 0 & 0 & 0 & 0 \\
0 & -4 & 0 & 0 & 0 & 0 & 0 & 0 & 3 & 0 & 0 \\
0 & 0 & 0 & 1 & 0 & 0 & 0 & 0 & 0 & 0 & 0 \\
0 & 0 & 0 & 0 & 1 & 0 & 0 & 0 & 0 & 0 & 0 \\
0 & 0 & 0 & 0 & 0 & 1 & 0 & 0 & 0 & 0 & 0 \\
0 & 0 & 0 & 0 & 0 & 0 & 1 & 0 & 0 & 0 & 0 \\
0 & 0 & 0 & 0 & 0 & 0 & 0 & 1 & 0 & 0 & 0 \\
0 & 0 & 0 & 0 & 0 & 0 & 0 & 0 & 0 & 1 & 0 \\
0 & 0 & 0 & 0 & 0 & 0 & 0 & 0 & 0 & 0 & 1
\end{array}\right)
\end{gather*}

\begin{gather*}
\left(\begin{array}{rrrrrrrrrrr}
1 & 0 & 0 & 0 & 0 & 0 & 0 & 0 & 0 & 0 & 0 \\
0 & 1 & 0 & 0 & 0 & 0 & 0 & 0 & 0 & 0 & 0 \\
0 & 0 & \frac{5}{6} & 0 & \frac{1}{2} & 0 & 0 & 0 & 0 & 0 & 0 \\
0 & 0 & 0 & 1 & 0 & 0 & 0 & 0 & 0 & 0 & 0 \\
0 & 0 & 0 & 0 & 0 & 1 & 0 & 0 & 0 & 0 & 0 \\
0 & 0 & 0 & 0 & 0 & 0 & 1 & 0 & 0 & 0 & 0 \\
0 & 0 & 0 & 0 & 0 & 0 & 0 & 1 & 0 & 0 & 0 \\
0 & 0 & 0 & 0 & 0 & 0 & 0 & 0 & 1 & 0 & 0 \\
0 & 0 & 0 & 0 & 0 & 0 & 0 & 0 & 0 & 1 & 0 \\
0 & 0 & \frac{1}{6} & 0 & \frac{1}{2} & 0 & 0 & 0 & 0 & 0 & 0 \\
0 & 0 & 0 & 0 & 0 & 0 & 0 & 0 & 0 & 0 & 1
\end{array}\right)
\left(\begin{array}{rrrrrrrrrrr}
1 & 0 & 0 & 0 & 0 & 0 & 0 & 0 & 0 & 0 & 0 \\
0 & 1 & 0 & 0 & 0 & 0 & 0 & 0 & 0 & 0 & 0 \\
0 & 0 & \frac{6}{7} & 0 & 0 & \frac{3}{7} & 0 & 0 & 0 & 0 & 0 \\
0 & 0 & 0 & 1 & 0 & 0 & 0 & 0 & 0 & 0 & 0 \\
0 & 0 & 0 & 0 & 1 & 0 & 0 & 0 & 0 & 0 & 0 \\
0 & 0 & 0 & 0 & 0 & 0 & 1 & 0 & 0 & 0 & 0 \\
0 & 0 & 0 & 0 & 0 & 0 & 0 & 1 & 0 & 0 & 0 \\
0 & 0 & 0 & 0 & 0 & 0 & 0 & 0 & 1 & 0 & 0 \\
0 & 0 & 0 & 0 & 0 & 0 & 0 & 0 & 0 & 1 & 0 \\
0 & 0 & 0 & 0 & 0 & 0 & 0 & 0 & 0 & 0 & 1 \\
0 & 0 & \frac{1}{7} & 0 & 0 & \frac{4}{7} & 0 & 0 & 0 & 0 & 0
\end{array}\right)\\
\left(\begin{array}{rrrrrrrrrrr}
1 & 0 & 0 & 0 & 0 & 0 & 0 & 0 & 0 & 0 & 0 \\
0 & 1 & 0 & 0 & 0 & 0 & 0 & 0 & 0 & 0 & 0 \\
0 & 0 & 1 & 0 & 0 & 0 & 0 & 0 & 0 & 0 & 0 \\
0 & 0 & 0 & 1 & 0 & 0 & 0 & 0 & 0 & 0 & 0 \\
0 & 0 & 0 & 0 & 1 & 0 & 0 & 0 & 0 & 0 & 0 \\
0 & 0 & 0 & 0 & 0 & 1 & 0 & 0 & 0 & 0 & 0 \\
0 & 0 & 0 & 0 & 0 & 0 & 3 & \frac{1}{2} & 0 & 0 & 0 \\
0 & 0 & 0 & 0 & 0 & 0 & 0 & 0 & 1 & 0 & 0 \\
0 & 0 & 0 & 0 & 0 & 0 & 0 & 0 & 0 & 1 & 0 \\
0 & 0 & 0 & 0 & 0 & 0 & 0 & 0 & 0 & 0 & 1 \\
0 & 0 & 0 & 0 & 0 & 0 & -2 & \frac{1}{2} & 0 & 0 & 0
\end{array}\right)
\left(\begin{array}{rrrrrrrrrrr}
1 & 0 & 0 & 0 & 0 & 0 & 0 & 0 & 0 & 0 & 0 \\
0 & 1 & 0 & 0 & 0 & 0 & 0 & 0 & 0 & 0 & 0 \\
0 & 0 & 1 & 0 & 0 & 0 & 0 & 0 & 0 & 0 & 0 \\
0 & 0 & 0 & 1 & 0 & 0 & 0 & 0 & 0 & 0 & 0 \\
0 & 0 & 0 & 0 & 1 & 0 & 0 & 0 & 0 & 0 & 0 \\
0 & 0 & 0 & 0 & 0 & 1 & 0 & 0 & 0 & 0 & 0 \\
0 & 0 & 0 & 0 & 0 & 0 & 1 & 0 & 0 & 0 & 0 \\
0 & 0 & 0 & 0 & 0 & 0 & 0 & \frac{4}{3} & 0 & 0 & -\frac{2}{3} \\
0 & 0 & 0 & 0 & 0 & 0 & 0 & 0 & 1 & 0 & 0 \\
0 & 0 & 0 & 0 & 0 & 0 & 0 & 0 & 0 & 1 & 0 \\
0 & 0 & 0 & 0 & 0 & 0 & 0 & -\frac{1}{3} & 0 & 0 & \frac{5}{3}
\end{array}\right) \\
\left(\begin{array}{rrrrrrrrrrr}
1 & 0 & 0 & 0 & 0 & 0 & 0 & 0 & 0 & 0 & 0 \\
0 & \frac{1}{5} & 0 & 0 & 0 & 0 & 0 & 0 & 0 & -\frac{2}{5} & 0 \\
0 & \frac{4}{5} & 0 & 0 & 0 & 0 & 0 & 0 & 0 & \frac{7}{5} & 0 \\
0 & 0 & 1 & 0 & 0 & 0 & 0 & 0 & 0 & 0 & 0 \\
0 & 0 & 0 & 1 & 0 & 0 & 0 & 0 & 0 & 0 & 0 \\
0 & 0 & 0 & 0 & 1 & 0 & 0 & 0 & 0 & 0 & 0 \\
0 & 0 & 0 & 0 & 0 & 1 & 0 & 0 & 0 & 0 & 0 \\
0 & 0 & 0 & 0 & 0 & 0 & 1 & 0 & 0 & 0 & 0 \\
0 & 0 & 0 & 0 & 0 & 0 & 0 & 1 & 0 & 0 & 0 \\
0 & 0 & 0 & 0 & 0 & 0 & 0 & 0 & 1 & 0 & 0 \\
0 & 0 & 0 & 0 & 0 & 0 & 0 & 0 & 0 & 0 & 1
\end{array}\right)
\left(\begin{array}{rrrrrrrrrrr}
1 & 0 & 0 & 0 & 0 & 0 & 0 & 0 & 0 & 0 & 0 \\
0 & 1 & 0 & 0 & 0 & 0 & 0 & 0 & 0 & 0 & 0 \\
0 & 0 & 0 & 0 & 1 & 0 & 0 & 0 & 0 & 0 & 0 \\
0 & 0 & 0 & 0 & 0 & 1 & 0 & 0 & 0 & 0 & 0 \\
0 & 0 & \frac{7}{3} & 2 & 0 & 0 & 0 & 0 & 0 & 0 & 0 \\
0 & 0 & 0 & 0 & 0 & 0 & 1 & 0 & 0 & 0 & 0 \\
0 & 0 & 0 & 0 & 0 & 0 & 0 & 1 & 0 & 0 & 0 \\
0 & 0 & 0 & 0 & 0 & 0 & 0 & 0 & 1 & 0 & 0 \\
0 & 0 & 0 & 0 & 0 & 0 & 0 & 0 & 0 & 1 & 0 \\
0 & 0 & 0 & 0 & 0 & 0 & 0 & 0 & 0 & 0 & 1 \\
0 & 0 & -\frac{4}{3} & -1 & 0 & 0 & 0 & 0 & 0 & 0 & 0
\end{array}\right) \\
\left(\begin{array}{rrrrrrrrrrr}
1 & 0 & 0 & 0 & 0 & 0 & 0 & 0 & 0 & 0 & 0 \\
0 & 1 & 0 & 0 & 0 & 0 & 0 & 0 & 0 & 0 & 0 \\
0 & 0 & 1 & 0 & 0 & 0 & 0 & 0 & 0 & 0 & 0 \\
0 & 0 & 0 & 1 & 0 & 0 & 0 & 0 & 0 & 0 & 0 \\
0 & 0 & 0 & 0 & 1 & 0 & 0 & 0 & 0 & 0 & 0 \\
0 & 0 & 0 & 0 & 0 & 1 & 0 & 0 & 0 & 0 & 0 \\
0 & 0 & 0 & 0 & 0 & 0 & \frac{1}{3} & 0 & \frac{1}{6} & 0 & 0 \\
0 & 0 & 0 & 0 & 0 & 0 & 0 & 1 & 0 & 0 & 0 \\
0 & 0 & 0 & 0 & 0 & 0 & 0 & 0 & 0 & 1 & 0 \\
0 & 0 & 0 & 0 & 0 & 0 & 0 & 0 & 0 & 0 & 1 \\
0 & 0 & 0 & 0 & 0 & 0 & \frac{2}{3} & 0 & \frac{5}{6} & 0 & 0
\end{array}\right)
\left(\begin{array}{rrrrrrrrrrr}
1 & 0 & 0 & 0 & 0 & 0 & 0 & 0 & 0 & 0 & 0 \\
0 & 1 & 0 & 0 & 0 & 0 & 0 & 0 & 0 & 0 & 0 \\
0 & 0 & 1 & 0 & 0 & 0 & 0 & 0 & 0 & 0 & 0 \\
0 & 0 & 0 & 1 & 0 & 0 & 0 & 0 & 0 & 0 & 0 \\
0 & 0 & 0 & 0 & 1 & 0 & 0 & 0 & 0 & 0 & 0 \\
0 & 0 & 0 & 0 & 0 & 1 & 0 & 0 & 0 & 0 & 0 \\
0 & 0 & 0 & 0 & 0 & 0 & 1 & 0 & 0 & 0 & 0 \\
0 & 0 & 0 & 0 & 0 & 0 & 0 & 0 & 0 & -3 & -5 \\
0 & 0 & 0 & 0 & 0 & 0 & 0 & 0 & 0 & 4 & 6 \\
0 & 0 & 0 & 0 & 0 & 0 & 0 & 1 & 0 & 0 & 0 \\
0 & 0 & 0 & 0 & 0 & 0 & 0 & 0 & 1 & 0 & 0
\end{array}\right)
\end{gather*}

\begin{gather*}
\left(\begin{array}{rrrrrrrrrrr}
1 & 0 & 0 & 0 & 0 & 0 & 0 & 0 & 0 & 0 & 0 \\
0 & 1 & 0 & 0 & 0 & 0 & 0 & 0 & 0 & 0 & 0 \\
0 & 0 & \frac{3}{2} & 0 & 0 & 0 & 0 & 0 & 0 & -\frac{3}{2} & 0 \\
0 & 0 & 0 & 1 & 0 & 0 & 0 & 0 & 0 & 0 & 0 \\
0 & 0 & 0 & 0 & 1 & 0 & 0 & 0 & 0 & 0 & 0 \\
0 & 0 & -\frac{1}{2} & 0 & 0 & 0 & 0 & 0 & 0 & \frac{5}{2} & 0 \\
0 & 0 & 0 & 0 & 0 & 1 & 0 & 0 & 0 & 0 & 0 \\
0 & 0 & 0 & 0 & 0 & 0 & 1 & 0 & 0 & 0 & 0 \\
0 & 0 & 0 & 0 & 0 & 0 & 0 & 1 & 0 & 0 & 0 \\
0 & 0 & 0 & 0 & 0 & 0 & 0 & 0 & 1 & 0 & 0 \\
0 & 0 & 0 & 0 & 0 & 0 & 0 & 0 & 0 & 0 & 1
\end{array}\right)
\left(\begin{array}{rrrrrrrrrrr}
1 & 0 & 0 & 0 & 0 & 0 & 0 & 0 & 0 & 0 & 0 \\
0 & 1 & 0 & 0 & 0 & 0 & 0 & 0 & 0 & 0 & 0 \\
0 & 0 & 2 & 0 & 0 & 0 & 0 & 0 & -3 & 0 & 0 \\
0 & 0 & 0 & 1 & 0 & 0 & 0 & 0 & 0 & 0 & 0 \\
0 & 0 & -1 & 0 & 0 & 0 & 0 & 0 & 4 & 0 & 0 \\
0 & 0 & 0 & 0 & 1 & 0 & 0 & 0 & 0 & 0 & 0 \\
0 & 0 & 0 & 0 & 0 & 1 & 0 & 0 & 0 & 0 & 0 \\
0 & 0 & 0 & 0 & 0 & 0 & 1 & 0 & 0 & 0 & 0 \\
0 & 0 & 0 & 0 & 0 & 0 & 0 & 1 & 0 & 0 & 0 \\
0 & 0 & 0 & 0 & 0 & 0 & 0 & 0 & 0 & 1 & 0 \\
0 & 0 & 0 & 0 & 0 & 0 & 0 & 0 & 0 & 0 & 1
\end{array}\right)\\
\left(\begin{array}{rrrrrrrrrrr}
1 & 0 & 0 & 0 & 0 & 0 & 0 & 0 & 0 & 0 & 0 \\
0 & 1 & 0 & 0 & 0 & 0 & 0 & 0 & 0 & 0 & 0 \\
0 & 0 & 1 & 0 & 0 & 0 & 0 & 0 & 0 & 0 & 0 \\
0 & 0 & 0 & \frac{7}{6} & \frac{1}{2} & 0 & 0 & 0 & 0 & 0 & 0 \\
0 & 0 & 0 & 0 & 0 & 1 & 0 & 0 & 0 & 0 & 0 \\
0 & 0 & 0 & 0 & 0 & 0 & 1 & 0 & 0 & 0 & 0 \\
0 & 0 & 0 & 0 & 0 & 0 & 0 & 1 & 0 & 0 & 0 \\
0 & 0 & 0 & 0 & 0 & 0 & 0 & 0 & 1 & 0 & 0 \\
0 & 0 & 0 & 0 & 0 & 0 & 0 & 0 & 0 & 1 & 0 \\
0 & 0 & 0 & 0 & 0 & 0 & 0 & 0 & 0 & 0 & 1 \\
0 & 0 & 0 & -\frac{1}{6} & \frac{1}{2} & 0 & 0 & 0 & 0 & 0 & 0
\end{array}\right)
\left(\begin{array}{rrrrrrrrrrr}
1 & 0 & 0 & 0 & 0 & 0 & 0 & 0 & 0 & 0 & 0 \\
0 & 1 & 0 & 0 & 0 & 0 & 0 & 0 & 0 & 0 & 0 \\
0 & 0 & 1 & 0 & 0 & 0 & 0 & 0 & 0 & 0 & 0 \\
0 & 0 & 0 & \frac{6}{5} & \frac{3}{5} & 0 & 0 & 0 & 0 & 0 & 0 \\
0 & 0 & 0 & 0 & 0 & 1 & 0 & 0 & 0 & 0 & 0 \\
0 & 0 & 0 & 0 & 0 & 0 & 1 & 0 & 0 & 0 & 0 \\
0 & 0 & 0 & 0 & 0 & 0 & 0 & 1 & 0 & 0 & 0 \\
0 & 0 & 0 & 0 & 0 & 0 & 0 & 0 & 1 & 0 & 0 \\
0 & 0 & 0 & 0 & 0 & 0 & 0 & 0 & 0 & 1 & 0 \\
0 & 0 & 0 & 0 & 0 & 0 & 0 & 0 & 0 & 0 & 1 \\
0 & 0 & 0 & -\frac{1}{5} & \frac{2}{5} & 0 & 0 & 0 & 0 & 0 & 0
\end{array}\right) \\
\left(\begin{array}{rrrrrrrrrrr}
\frac{3}{2} & 0 & 0 & 0 & 0 & 0 & -\frac{1}{2} & 0 & 0 & 0 & 0 \\
0 & 1 & 0 & 0 & 0 & 0 & 0 & 0 & 0 & 0 & 0 \\
-\frac{1}{2} & 0 & 0 & 0 & 0 & 0 & \frac{3}{2} & 0 & 0 & 0 & 0 \\
0 & 0 & 1 & 0 & 0 & 0 & 0 & 0 & 0 & 0 & 0 \\
0 & 0 & 0 & 1 & 0 & 0 & 0 & 0 & 0 & 0 & 0 \\
0 & 0 & 0 & 0 & 1 & 0 & 0 & 0 & 0 & 0 & 0 \\
0 & 0 & 0 & 0 & 0 & 1 & 0 & 0 & 0 & 0 & 0 \\
0 & 0 & 0 & 0 & 0 & 0 & 0 & 1 & 0 & 0 & 0 \\
0 & 0 & 0 & 0 & 0 & 0 & 0 & 0 & 1 & 0 & 0 \\
0 & 0 & 0 & 0 & 0 & 0 & 0 & 0 & 0 & 1 & 0 \\
0 & 0 & 0 & 0 & 0 & 0 & 0 & 0 & 0 & 0 & 1
\end{array}\right)
\left(\begin{array}{rrrrrrrrrrr}
1 & 0 & 0 & 0 & 0 & 0 & 0 & 0 & 0 & 0 & 0 \\
0 & 1 & 0 & 0 & 0 & 0 & 0 & 0 & 0 & 0 & 0 \\
0 & 0 & \frac{5}{4} & \frac{3}{4} & 0 & 0 & 0 & 0 & 0 & 0 & 0 \\
0 & 0 & 0 & 0 & 1 & 0 & 0 & 0 & 0 & 0 & 0 \\
0 & 0 & 0 & 0 & 0 & 1 & 0 & 0 & 0 & 0 & 0 \\
0 & 0 & 0 & 0 & 0 & 0 & 1 & 0 & 0 & 0 & 0 \\
0 & 0 & 0 & 0 & 0 & 0 & 0 & 1 & 0 & 0 & 0 \\
0 & 0 & 0 & 0 & 0 & 0 & 0 & 0 & 1 & 0 & 0 \\
0 & 0 & 0 & 0 & 0 & 0 & 0 & 0 & 0 & 1 & 0 \\
0 & 0 & 0 & 0 & 0 & 0 & 0 & 0 & 0 & 0 & 1 \\
0 & 0 & -\frac{1}{4} & \frac{1}{4} & 0 & 0 & 0 & 0 & 0 & 0 & 0
\end{array}\right) \\
= \left(\begin{array}{rrrrrrrrrrr}
1 & \frac{4}{35} & 0 & 0 & 0 & \frac{4}{7} & \frac{2}{15} & 0 & 0 & 0 & \frac{2}{3} \\
\frac{4}{3} & 1 & -\frac{8}{3} & 0 & 0 & 0 & 0 & -4 & 0 & 0 & 0 \\
0 & -\frac{16}{35} & 1 & 0 & 0 & -\frac{16}{7} & -\frac{8}{15} & 0 & 0 & 0 & -\frac{8}{3} \\
0 & 0 & 0 & 1 & 0 & 0 & 0 & 0 & 0 & 0 & 0 \\
0 & 0 & 0 & 0 & 1 & 0 & 0 & 0 & 0 & 0 & 0 \\
-4 & 0 & 8 & 0 & 0 & 1 & 0 & 12 & 0 & 0 & 0 \\
-\frac{2}{3} & 0 & \frac{4}{3} & 0 & 0 & 0 & 1 & 2 & 0 & 0 & 0 \\
0 & \frac{12}{35} & 0 & 0 & 0 & \frac{12}{7} & \frac{2}{5} & 1 & 0 & 0 & 2 \\
0 & 0 & 0 & 0 & 0 & 0 & 0 & 0 & 1 & 0 & 0 \\
0 & 0 & 0 & 0 & 0 & 0 & 0 & 0 & 0 & 1 & 0 \\
\frac{10}{3} & 0 & -\frac{20}{3} & 0 & 0 & 0 & 0 & -10 & 0 & 0 & 1
\end{array}\right)
\end{gather*}

The braid $b_{kl}$ will correspond to the product of the following matrices:
$$
f_5(b_{kl})=A_{4625}A_{5716}A_{5817}A_{1645}A_{1756}A_{2578}A_{7815}A_{5627}A_{2758}A_{1567}A_{6725}A_{4516}A_{1657}A_{2546}.
$$

Let us denote the product of these matrices by the  $A_{b_{kl}}$. In matrix form this product looks as follow:
$$
A_{b_{kl}}=f_5(b_{kl})=
$$

\begin{gather*}
\left(\begin{array}{rrrrrrrrrrr}
1 & 0 & 0 & 0 & 0 & 0 & 0 & 0 & 0 & 0 & 0 \\
0 & 1 & 0 & 0 & 0 & 0 & 0 & 0 & 0 & 0 & 0 \\
0 & 0 & 1 & 0 & 0 & 0 & 0 & 0 & 0 & 0 & 0 \\
0 & 0 & 0 & 1 & 0 & 0 & 0 & 0 & 0 & 0 & 0 \\
0 & 0 & 0 & 0 & 1 & 0 & 0 & 0 & 0 & 0 & 0 \\
0 & 0 & 0 & 0 & 0 & 1 & 0 & 0 & 0 & 0 & 0 \\
0 & 0 & 0 & 0 & 0 & 0 & 1 & 0 & 0 & 0 & 0 \\
0 & 0 & 0 & 0 & 0 & 0 & 0 & \frac{1}{2} & 0 & 0 & -1 \\
0 & 0 & 0 & 0 & 0 & 0 & 0 & \frac{1}{2} & 0 & 0 & 2 \\
0 & 0 & 0 & 0 & 0 & 0 & 0 & 0 & 1 & 0 & 0 \\
0 & 0 & 0 & 0 & 0 & 0 & 0 & 0 & 0 & 1 & 0
\end{array}\right)
\left(\begin{array}{rrrrrrrrrrr}
1 & 0 & 0 & 0 & 0 & 0 & 0 & 0 & 0 & 0 & 0 \\
0 & 1 & 0 & 0 & 0 & 0 & 0 & 0 & 0 & 0 & 0 \\
0 & 0 & 1 & 0 & 0 & 0 & 0 & 0 & 0 & 0 & 0 \\
0 & 0 & 0 & \frac{1}{2} & 0 & 0 & 0 & 0 & 0 & 0 & -2 \\
0 & 0 & 0 & \frac{1}{2} & 0 & 0 & 0 & 0 & 0 & 0 & 3 \\
0 & 0 & 0 & 0 & 1 & 0 & 0 & 0 & 0 & 0 & 0 \\
0 & 0 & 0 & 0 & 0 & 1 & 0 & 0 & 0 & 0 & 0 \\
0 & 0 & 0 & 0 & 0 & 0 & 1 & 0 & 0 & 0 & 0 \\
0 & 0 & 0 & 0 & 0 & 0 & 0 & 1 & 0 & 0 & 0 \\
0 & 0 & 0 & 0 & 0 & 0 & 0 & 0 & 1 & 0 & 0 \\
0 & 0 & 0 & 0 & 0 & 0 & 0 & 0 & 0 & 1 & 0
\end{array}\right) \\
\left(\begin{array}{rrrrrrrrrrr}
1 & 0 & 0 & 0 & 0 & 0 & 0 & 0 & 0 & 0 & 0 \\
0 & 1 & 0 & 0 & 0 & 0 & 0 & 0 & 0 & 0 & 0 \\
0 & 0 & 1 & 0 & 0 & 0 & 0 & 0 & 0 & 0 & 0 \\
0 & 0 & 0 & \frac{2}{3} & 0 & 0 & 0 & 0 & 0 & 0 & -\frac{4}{3} \\
0 & 0 & 0 & \frac{1}{3} & 0 & 0 & 0 & 0 & 0 & 0 & \frac{7}{3} \\
0 & 0 & 0 & 0 & 1 & 0 & 0 & 0 & 0 & 0 & 0 \\
0 & 0 & 0 & 0 & 0 & 1 & 0 & 0 & 0 & 0 & 0 \\
0 & 0 & 0 & 0 & 0 & 0 & 1 & 0 & 0 & 0 & 0 \\
0 & 0 & 0 & 0 & 0 & 0 & 0 & 1 & 0 & 0 & 0 \\
0 & 0 & 0 & 0 & 0 & 0 & 0 & 0 & 1 & 0 & 0 \\
0 & 0 & 0 & 0 & 0 & 0 & 0 & 0 & 0 & 1 & 0
\end{array}\right)
\left(\begin{array}{rrrrrrrrrrr}
1 & 0 & 0 & 0 & 0 & 0 & 0 & 0 & 0 & 0 & 0 \\
0 & 1 & 0 & 0 & 0 & 0 & 0 & 0 & 0 & 0 & 0 \\
0 & 0 & \frac{4}{5} & \frac{3}{5} & 0 & 0 & 0 & 0 & 0 & 0 & 0 \\
0 & 0 & 0 & 0 & 1 & 0 & 0 & 0 & 0 & 0 & 0 \\
0 & 0 & 0 & 0 & 0 & 1 & 0 & 0 & 0 & 0 & 0 \\
0 & 0 & 0 & 0 & 0 & 0 & 1 & 0 & 0 & 0 & 0 \\
0 & 0 & 0 & 0 & 0 & 0 & 0 & 1 & 0 & 0 & 0 \\
0 & 0 & 0 & 0 & 0 & 0 & 0 & 0 & 1 & 0 & 0 \\
0 & 0 & \frac{1}{5} & \frac{2}{5} & 0 & 0 & 0 & 0 & 0 & 0 & 0 \\
0 & 0 & 0 & 0 & 0 & 0 & 0 & 0 & 0 & 1 & 0 \\
0 & 0 & 0 & 0 & 0 & 0 & 0 & 0 & 0 & 0 & 1
\end{array}\right) \\
\left(\begin{array}{rrrrrrrrrrr}
1 & 0 & 0 & 0 & 0 & 0 & 0 & 0 & 0 & 0 & 0 \\
0 & 1 & 0 & 0 & 0 & 0 & 0 & 0 & 0 & 0 & 0 \\
0 & 0 & 1 & 0 & 0 & 0 & 0 & 0 & 0 & 0 & 0 \\
0 & 0 & 0 & \frac{5}{6} & 0 & \frac{2}{3} & 0 & 0 & 0 & 0 & 0 \\
0 & 0 & 0 & 0 & 1 & 0 & 0 & 0 & 0 & 0 & 0 \\
0 & 0 & 0 & 0 & 0 & 0 & 1 & 0 & 0 & 0 & 0 \\
0 & 0 & 0 & 0 & 0 & 0 & 0 & 1 & 0 & 0 & 0 \\
0 & 0 & 0 & 0 & 0 & 0 & 0 & 0 & 1 & 0 & 0 \\
0 & 0 & 0 & 0 & 0 & 0 & 0 & 0 & 0 & 1 & 0 \\
0 & 0 & 0 & \frac{1}{6} & 0 & \frac{1}{3} & 0 & 0 & 0 & 0 & 0 \\
0 & 0 & 0 & 0 & 0 & 0 & 0 & 0 & 0 & 0 & 1
\end{array}\right)
\left(\begin{array}{rrrrrrrrrrr}
1 & 0 & 0 & 0 & 0 & 0 & 0 & 0 & 0 & 0 & 0 \\
0 & 1 & 0 & 0 & 0 & 0 & 0 & 0 & 0 & 0 & 0 \\
0 & 0 & 1 & 0 & 0 & 0 & 0 & 0 & 0 & 0 & 0 \\
0 & 0 & 0 & 1 & 0 & 0 & 0 & 0 & 0 & 0 & 0 \\
0 & 0 & 0 & 0 & 1 & 0 & 0 & 0 & 0 & 0 & 0 \\
0 & 0 & 0 & 0 & 0 & 1 & 0 & 0 & 0 & 0 & 0 \\
0 & 0 & 0 & 0 & 0 & 0 & 1 & 0 & 0 & 0 & 0 \\
0 & 0 & 0 & 0 & 0 & 0 & 0 & 1 & 0 & 0 & 0 \\
0 & 0 & 0 & 0 & 0 & 0 & 0 & 0 & 0 & 0 & 1 \\
0 & 0 & 0 & 0 & 0 & 0 & 0 & 0 & 2 & \frac{5}{3} & 0 \\
0 & 0 & 0 & 0 & 0 & 0 & 0 & 0 & -1 & -\frac{2}{3} & 0
\end{array}\right)
\end{gather*}
\begin{gather*}
\left(\begin{array}{rrrrrrrrrrr}
1 & 0 & 0 & 0 & 0 & 0 & 0 & 0 & 0 & 0 & 0 \\
0 & 1 & 0 & 0 & 0 & 0 & 0 & 0 & 0 & 0 & 0 \\
0 & 0 & 1 & 0 & 0 & 0 & 0 & 0 & 0 & 0 & 0 \\
0 & 0 & 0 & 0 & -2 & 0 & 0 & 0 & 0 & 0 & -6 \\
0 & 0 & 0 & 0 & 3 & 0 & 0 & 0 & 0 & 0 & 7 \\
0 & 0 & 0 & 1 & 0 & 0 & 0 & 0 & 0 & 0 & 0 \\
0 & 0 & 0 & 0 & 0 & 1 & 0 & 0 & 0 & 0 & 0 \\
0 & 0 & 0 & 0 & 0 & 0 & 1 & 0 & 0 & 0 & 0 \\
0 & 0 & 0 & 0 & 0 & 0 & 0 & 1 & 0 & 0 & 0 \\
0 & 0 & 0 & 0 & 0 & 0 & 0 & 0 & 1 & 0 & 0 \\
0 & 0 & 0 & 0 & 0 & 0 & 0 & 0 & 0 & 1 & 0
\end{array}\right)
\left(\begin{array}{rrrrrrrrrrr}
1 & 0 & 0 & 0 & 0 & 0 & 0 & 0 & 0 & 0 & 0 \\
0 & 1 & 0 & 0 & 0 & 0 & 0 & 0 & 0 & 0 & 0 \\
0 & 0 & 1 & 0 & 0 & 0 & 0 & 0 & 0 & 0 & 0 \\
0 & 0 & 0 & 1 & 0 & 0 & 0 & 0 & 0 & 0 & 0 \\
0 & 0 & 0 & 0 & 1 & 0 & 0 & 0 & 0 & 0 & 0 \\
0 & 0 & 0 & 0 & 0 & 1 & 0 & 0 & 0 & 0 & 0 \\
0 & 0 & 0 & 0 & 0 & 0 & 1 & 0 & 0 & 0 & 0 \\
0 & 0 & 0 & 0 & 0 & 0 & 0 & 2 & 0 & -3 & 0 \\
0 & 0 & 0 & 0 & 0 & 0 & 0 & 0 & 1 & 0 & 0 \\
0 & 0 & 0 & 0 & 0 & 0 & 0 & -1 & 0 & 4 & 0 \\
0 & 0 & 0 & 0 & 0 & 0 & 0 & 0 & 0 & 0 & 1
\end{array}\right) \\
\left(\begin{array}{rrrrrrrrrrr}
1 & 0 & 0 & 0 & 0 & 0 & 0 & 0 & 0 & 0 & 0 \\
0 & 1 & 0 & 0 & 0 & 0 & 0 & 0 & 0 & 0 & 0 \\
0 & 0 & 1 & 0 & 0 & 0 & 0 & 0 & 0 & 0 & 0 \\
0 & 0 & 0 & 1 & 0 & 0 & 0 & 0 & 0 & 0 & 0 \\
0 & 0 & 0 & 0 & 1 & 0 & 0 & 0 & 0 & 0 & 0 \\
0 & 0 & 0 & 0 & 0 & 1 & 0 & 0 & 0 & 0 & 0 \\
0 & 0 & 0 & 0 & 0 & 0 & 1 & 0 & 0 & 0 & 0 \\
0 & 0 & 0 & 0 & 0 & 0 & 0 & 1 & 0 & 0 & 0 \\
0 & 0 & 0 & 0 & 0 & 0 & 0 & 0 & \frac{6}{5} & \frac{3}{5} & 0 \\
0 & 0 & 0 & 0 & 0 & 0 & 0 & 0 & 0 & 0 & 1 \\
0 & 0 & 0 & 0 & 0 & 0 & 0 & 0 & -\frac{1}{5} & \frac{2}{5} & 0
\end{array}\right)
\left(\begin{array}{rrrrrrrrrrr}
1 & 0 & 0 & 0 & 0 & 0 & 0 & 0 & 0 & 0 & 0 \\
0 & 1 & 0 & 0 & 0 & 0 & 0 & 0 & 0 & 0 & 0 \\
0 & 0 & 1 & 0 & 0 & 0 & 0 & 0 & 0 & 0 & 0 \\
0 & 0 & 0 & \frac{3}{2} & \frac{5}{4} & 0 & 0 & 0 & 0 & 0 & 0 \\
0 & 0 & 0 & 0 & 0 & 1 & 0 & 0 & 0 & 0 & 0 \\
0 & 0 & 0 & 0 & 0 & 0 & 1 & 0 & 0 & 0 & 0 \\
0 & 0 & 0 & 0 & 0 & 0 & 0 & 1 & 0 & 0 & 0 \\
0 & 0 & 0 & 0 & 0 & 0 & 0 & 0 & 1 & 0 & 0 \\
0 & 0 & 0 & 0 & 0 & 0 & 0 & 0 & 0 & 1 & 0 \\
0 & 0 & 0 & 0 & 0 & 0 & 0 & 0 & 0 & 0 & 1 \\
0 & 0 & 0 & -\frac{1}{2} & -\frac{1}{4} & 0 & 0 & 0 & 0 & 0 & 0
\end{array}\right) \\
\left(\begin{array}{rrrrrrrrrrr}
1 & 0 & 0 & 0 & 0 & 0 & 0 & 0 & 0 & 0 & 0 \\
0 & 1 & 0 & 0 & 0 & 0 & 0 & 0 & 0 & 0 & 0 \\
0 & 0 & 1 & 0 & 0 & 0 & 0 & 0 & 0 & 0 & 0 \\
0 & 0 & 0 & 1 & 0 & 0 & 0 & 0 & 0 & 0 & 0 \\
0 & 0 & 0 & 0 & 1 & 0 & 0 & 0 & 0 & 0 & 0 \\
0 & 0 & 0 & 0 & 0 & 1 & 0 & 0 & 0 & 0 & 0 \\
0 & 0 & 0 & 0 & 0 & 0 & 1 & 0 & 0 & 0 & 0 \\
0 & 0 & 0 & 0 & 0 & 0 & 0 & 1 & 0 & 0 & 0 \\
0 & 0 & 0 & 0 & 0 & 0 & 0 & 0 & -1 & 0 & -4 \\
0 & 0 & 0 & 0 & 0 & 0 & 0 & 0 & 2 & 0 & 5 \\
0 & 0 & 0 & 0 & 0 & 0 & 0 & 0 & 0 & 1 & 0
\end{array}\right)
\left(\begin{array}{rrrrrrrrrrr}
1 & 0 & 0 & 0 & 0 & 0 & 0 & 0 & 0 & 0 & 0 \\
0 & 1 & 0 & 0 & 0 & 0 & 0 & 0 & 0 & 0 & 0 \\
0 & 0 & 2 & 0 & 0 & 0 & 0 & 0 & 0 & -3 & 0 \\
0 & 0 & -1 & 0 & 0 & 0 & 0 & 0 & 0 & 4 & 0 \\
0 & 0 & 0 & 1 & 0 & 0 & 0 & 0 & 0 & 0 & 0 \\
0 & 0 & 0 & 0 & 1 & 0 & 0 & 0 & 0 & 0 & 0 \\
0 & 0 & 0 & 0 & 0 & 1 & 0 & 0 & 0 & 0 & 0 \\
0 & 0 & 0 & 0 & 0 & 0 & 1 & 0 & 0 & 0 & 0 \\
0 & 0 & 0 & 0 & 0 & 0 & 0 & 1 & 0 & 0 & 0 \\
0 & 0 & 0 & 0 & 0 & 0 & 0 & 0 & 1 & 0 & 0 \\
0 & 0 & 0 & 0 & 0 & 0 & 0 & 0 & 0 & 0 & 1
\end{array}\right) \\
\left(\begin{array}{rrrrrrrrrrr}
1 & 0 & 0 & 0 & 0 & 0 & 0 & 0 & 0 & 0 & 0 \\
0 & 1 & 0 & 0 & 0 & 0 & 0 & 0 & 0 & 0 & 0 \\
0 & 0 & 1 & 0 & 0 & 0 & 0 & 0 & 0 & 0 & 0 \\
0 & 0 & 0 & \frac{6}{5} & \frac{4}{5} & 0 & 0 & 0 & 0 & 0 & 0 \\
0 & 0 & 0 & 0 & 0 & 1 & 0 & 0 & 0 & 0 & 0 \\
0 & 0 & 0 & 0 & 0 & 0 & 1 & 0 & 0 & 0 & 0 \\
0 & 0 & 0 & 0 & 0 & 0 & 0 & 1 & 0 & 0 & 0 \\
0 & 0 & 0 & 0 & 0 & 0 & 0 & 0 & 1 & 0 & 0 \\
0 & 0 & 0 & 0 & 0 & 0 & 0 & 0 & 0 & 1 & 0 \\
0 & 0 & 0 & 0 & 0 & 0 & 0 & 0 & 0 & 0 & 1 \\
0 & 0 & 0 & -\frac{1}{5} & \frac{1}{5} & 0 & 0 & 0 & 0 & 0 & 0
\end{array}\right)
\left(\begin{array}{rrrrrrrrrrr}
1 & 0 & 0 & 0 & 0 & 0 & 0 & 0 & 0 & 0 & 0 \\
0 & 1 & 0 & 0 & 0 & 0 & 0 & 0 & 0 & 0 & 0 \\
0 & 0 & 1 & 0 & 0 & 0 & 0 & 0 & 0 & 0 & 0 \\
0 & 0 & 0 & 1 & 0 & 0 & 0 & 0 & 0 & 0 & 0 \\
0 & 0 & 0 & 0 & 1 & 0 & 0 & 0 & 0 & 0 & 0 \\
0 & 0 & 0 & 0 & 0 & 1 & 0 & 0 & 0 & 0 & 0 \\
0 & 0 & 0 & 0 & 0 & 0 & 1 & 0 & 0 & 0 & 0 \\
0 & 0 & 0 & 0 & 0 & 0 & 0 & \frac{4}{3} & \frac{2}{3} & 0 & 0 \\
0 & 0 & 0 & 0 & 0 & 0 & 0 & 0 & 0 & 1 & 0 \\
0 & 0 & 0 & 0 & 0 & 0 & 0 & 0 & 0 & 0 & 1 \\
0 & 0 & 0 & 0 & 0 & 0 & 0 & -\frac{1}{3} & \frac{1}{3} & 0 & 0
\end{array}\right)
\end{gather*}

$$
=\left(\begin{array}{rrrrrrrrrrr}
1 & 0 & 0 & 0 & 0 & 0 & 0 & 0 & 0 & 0 & 0 \\
0 & 1 & 0 & 0 & 0 & 0 & 0 & 0 & 0 & 0 & 0 \\
0 & 0 & 1 & 0 & 1 & -1 & 0 & 0 & 0 & \frac{6}{5} & -\frac{6}{5} \\
0 & 0 & 0 & 1 & -\frac{5}{3} & \frac{5}{3} & 0 & 0 & 0 & -2 & 2 \\
0 & 0 & -\frac{5}{2} & \frac{5}{2} & 1 & 0 & 0 & -\frac{10}{3} & \frac{10}{3} & 0 & 0 \\
0 & 0 & \frac{7}{2} & -\frac{7}{2} & 0 & 1 & 0 & \frac{14}{3} & -\frac{14}{3} & 0 & 0 \\
0 & 0 & 0 & 0 & 0 & 0 & 1 & 0 & 0 & 0 & 0 \\
0 & 0 & 0 & 0 & -\frac{2}{3} & \frac{2}{3} & 0 & 1 & 0 & -\frac{4}{5} & \frac{4}{5} \\
0 & 0 & 0 & 0 & \frac{4}{3} & -\frac{4}{3} & 0 & 0 & 1 & \frac{8}{5} & -\frac{8}{5} \\
0 & 0 & 2 & -2 & 0 & 0 & 0 & \frac{8}{3} & -\frac{8}{3} & 1 & 0 \\
0 & 0 & -3 & 3 & 0 & 0 & 0 & -4 & 4 & 0 & 1
\end{array}\right)
$$

\bigbreak
After which we can easily verify that the desired relation is satisfied:
\bigbreak
\resizebox{0.9\hsize}{!}{$
A_{b_{ij}}A_{b_{kl}}=A_{b_{kl}}A_{b_{ij}}=\left(\begin{array}{rrrrrrrrrrr}
1 & \frac{4}{35} & 0 & 0 & 0 & \frac{4}{7} & \frac{2}{15} & 0 & 0 & 0 & \frac{2}{3} \\
\frac{4}{3} & 1 & -\frac{8}{3} & 0 & 0 & 0 & 0 & -4 & 0 & 0 & 0 \\
0 & -\frac{16}{35} & 1 & 0 & 1 & -\frac{23}{7} & -\frac{8}{15} & 0 & 0 & \frac{6}{5} & -\frac{58}{15} \\
0 & 0 & 0 & 1 & -\frac{5}{3} & \frac{5}{3} & 0 & 0 & 0 & -2 & 2 \\
0 & 0 & -\frac{5}{2} & \frac{5}{2} & 1 & 0 & 0 & -\frac{10}{3} & \frac{10}{3} & 0 & 0 \\
-4 & 0 & \frac{23}{2} & -\frac{7}{2} & 0 & 1 & 0 & \frac{50}{3} & -\frac{14}{3} & 0 & 0 \\
-\frac{2}{3} & 0 & \frac{4}{3} & 0 & 0 & 0 & 1 & 2 & 0 & 0 & 0 \\
0 & \frac{12}{35} & 0 & 0 & -\frac{2}{3} & \frac{50}{21} & \frac{2}{5} & 1 & 0 & -\frac{4}{5} & \frac{14}{5} \\
0 & 0 & 0 & 0 & \frac{4}{3} & -\frac{4}{3} & 0 & 0 & 1 & \frac{8}{5} & -\frac{8}{5} \\
0 & 0 & 2 & -2 & 0 & 0 & 0 & \frac{8}{3} & -\frac{8}{3} & 1 & 0 \\
\frac{10}{3} & 0 & -\frac{29}{3} & 3 & 0 & 0 & 0 & -14 & 4 & 0 & 1
\end{array}\right).
$}

\bigbreak
All other relations given in Definition \ref{def:pure_braid_representation} are verified similarly.

\end{appendices}

\setcounter{secnumdepth}{0}

\pagebreak

\end{document}